\newtheorem{theorem}{\bf \large Theorem}[section]
\newtheorem{lemma}{\bf \large Lemma}[section]
\newtheorem{ex}{\bf \large Example}[section]
\newtheorem{remark}{\bf \large Remark}[section]
\title{\textbf{A M\"{o}bius scalar curvature rigidity on compact  conformally flat  hypersurfaces in $\mathbb{S}^{n+1}$}}
\author {\normalsize {Limiao Lin$^a$, Tongzhu Li$^b$, Changping Wang$^a$\thanks{The work  is  supported by the grant No.
11571037 and No.11471021 of NSFC.}}, \\
\small{$^a$ College of Mathematics and Informatics, FJKLMAA, Fujian Normal University,}\\
\small{Fuzhou, 350108, China,}\\
\small{$^b$ Department of Mathematics, Beijing Institute of
Technology,} \\
\small{Beijing,100081,China,},\\
\small{E-mail: 83343055@163.com,~~ litz@bit.edu.cn,~~ cpwang@fjnu.edu.cn.}}
\date{}
\begin{document}
\maketitle
\begin{abstract}
In this paper, we study  conformally flat hypersurfaces of dimension $n(\geq 4)$ in  $\mathbb{S}^{n+1}$
using the framework of M\"{o}bius geometry.
First, we classify and explicitly express   the conformally flat  hypersurfaces of dimension $n(\geq 4)$ with constant M\"{o}bius scalar curvature
 under the M\"{o}bius transformation group of $\mathbb{S}^{n+1}$. Second, we prove
that if the  conformally flat  hypersurface with constant M\"{o}bius scalar curvature $R$ is compact,
then $$R=(n-1)(n-2)r^2, ~~0<r<1,$$ and the compact conformally flat hypersurface is M\"{o}bius equivalent to the torus
$$\mathbb{ S}^1(\sqrt{1-r^2})\times \mathbb{S}^{n-1}(r)\hookrightarrow \mathbb{S}^{n+1}.$$
\end{abstract}
\medskip\noindent
{\bf 2000 Mathematics Subject Classification:} 53A30; 51B10.
\par\noindent {\bf Key words:} conformally flat hypersurface, M\"{o}bius metric, M\"{o}bius scalar curvature.

\vskip 1 cm
\section{Introduction}
A Riemannian manifold $(M^n, g)$ is conformally
flat, if every point has a neighborhood which is conformal to an open
set in the Euclidean space $\mathbb{R}^n$. A hypersurface of the sphere
$\mathbb{S}^{n+1}$ is said to be conformally flat if so it is with respect to the induced
metric. Due to conformal invariant objects,  the theory of conformally flat hypersurfaces
is essentially the same whether it is considered in the space forms $\mathbb{R}^{n+1}$, $\mathbb{S}^{n+1}$ or $\mathbb{H}^{n+1}$.
In fact,
there exists conformal
diffeomorphism between the space forms.
The $(n+1)$-dimensional hyperbolic space $\mathbb{H}^{n+1}$ defined by
$$\mathbb{H}^{n+1}=\{(y_0,y_1,\cdots,y_{n+1})|-y_0^2+y_1^2+\cdots+y_{n+1}^2=-1,y_0>0\}.$$
The conformal
diffeomorphisms $\sigma,\tau$ are defined by
\begin{equation*}
\begin{split}
&\sigma:\mathbb{R}^{n+1}\rightarrow \mathbb{S}^{n+1}\backslash
\{(-1,\vec{0})\},~~~~\sigma(u)=(\frac{1-|u|^2}{1+|u|^2},\frac{2u}{1+|u|^2}),\\
&\tau:\mathbb{H}^{n+1}\rightarrow \mathbb{S}^{n+1}_+\subset \mathbb{S}^{n+1},
~~~\tau(y)=(\frac{1}{y_0},\frac{\vec{y}}{y_0}), ~~y=(y_0,\vec{y})\in
\mathbb{H}^{n+1},
\end{split}
\end{equation*}
where $\mathbb{S}^{n+1}_+$ is the hemisphere in $\mathbb{S}^{n+1}$ whose the first
coordinate is positive.
By conformal
diffeomorphisms $\sigma,\tau$,
the conformally flat hypersurfaces
in space forms are equivalent to each other.

The dimension of the hypersurface seems to play an important role
in the study of conformally flat hypersurfaces. For $n\geq4$, the immersed hypersurface  $f: M^n \to \mathbb{S}^{n+1}$ is conformally flat
if and only if at least $n-1$ of the principal curvatures coincide
at each point by the result of Cartan-Schouten
(\cite{c},\cite{schou}). Cartan-Schouten's result is no longer true in dimension
$3$. Lancaster (\cite{lan}) gave some examples of conformally flat
hypersurfaces in $\mathbb{R}^4$ having three different principal curvatures. For $n=2$,
the existence of isothermal coordinates means that any Riemannian
surface is conformally flat.
Do Carmo, Dajczer and Mercuri in \cite{do} have studied Diffeomorphism
types of the compact conformally flat hypersurfaces in $\mathbb{R}^{n+1}$.
Pinkall in \cite{pin1} was studied the intrinsic conformal geometry of compact conformally flat hypersurfaces.
Suyama in \cite{su}
explicitly constructs compact conformally flat hypersurfaces in  space forms using codimension
one foliation by $(n-1)$-spheres.
 Standard examples of the conformally flat hypersurfaces come from cones, cylinders,
or rotational hypersurfaces over a curve in  Euclidean
$2$-space $\mathbb{R}^2$, $2$-sphere $\mathbb{S}^2$, or hyperbolic $2$-space $\mathbb{R}^2_+$, respectively (see section 3). In \cite{lin}, Lin and Guo showed that if the conformally flat hypersurface has closed M\"{o}bius form, then it is M\"{o}bius  equivalent to one of the standard examples.

It is known that the  conformal transformations group of a sphere is isomorphic to
its M\"{o}bius transformation group.  As conformal invariant objects, conformally flat hypersurfaces are investigated
in this paper using the framework of M\"{o}bius geometry. If the conformally flat hypersurface is no umbilical point everywhere, then there
exists a global M\"{o}bius metric (see section 2), which is invariant under the M\"{o}bius
transformation group of $\mathbb{S}^{n+1}$.  The scalar curvature with respect to the M\"{o}bius
metric is called M\"{o}bius scalar curvature.
First, we classify  locally   the conformally flat  hypersurfaces of dimension $n(\geq 4)$ with constant M\"{o}bius scalar curvature
 under the M\"{o}bius transformation group of $\mathbb{S}^{n+1}$.

\begin{theorem}\label{the1}
Let $f:M^n\rightarrow \mathbb{S}^{n+1}$, $n\geq 4$, be a conformally flat hypersurface
without umbilical points. If the M\"{o}bius scalar curvature is constant,
then the M\"{o}bius form is closed and $f$  is M\"{o}bius equivalent to one of the following hypersurfaces in $\mathbb{S}^{n+1}$,\\
$(i)$ the image of $\sigma$ of a cylinder over a curvature-spiral in  $\mathbb{R}^2\subset \mathbb{R}^{n+1}$;\\
$(ii)$ the image of $\sigma$ of a cone over a curvature-spiral in  $\mathbb{S}^2\subset \mathbb{R}^3\subset \mathbb{R}^{n+1}$; \\
$(iii)$ the image of $\sigma$ of a rotational hypersurface over a curvature-spiral in  $\mathbb{R}^2_+\subset \mathbb{R}^{n+1}$.\\
\end{theorem}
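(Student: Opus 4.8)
The plan is to reduce the statement, via the Cartan--Schouten description together with the normalizations of the M\"obius invariants, to a single scalar question, and then to play the conformal flatness of the M\"obius metric against the constancy of the M\"obius scalar curvature to answer it; the classification into the three models will follow by invoking Lin--Guo \cite{lin}.

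I would first pin down the local model. Since $n\ge 4$ and $f$ has no umbilics, Cartan--Schouten forces exactly two distinct principal curvatures, of multiplicities $n-1$ and $1$; calling them $\mu$ (multiplicity $n-1$) and $\nu$, a short computation gives $\rho=|\mu-\nu|$, so the two M\"obius principal curvatures are the \emph{constants} $b_1=\dots=b_{n-1}=\tfrac1n$ and $b_n=\tfrac{1-n}{n}$. Fix a local $g$-orthonormal frame $\{E_1,\dots,E_n\}$ diagonalizing the M\"obius second fundamental form $B$, with $V_1=\operatorname{span}\{E_1,\dots,E_{n-1}\}$ its $b_1$-eigenspace. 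Because the $b_i$ are constant, the Codazzi equation $B_{ij,k}-B_{ik,j}=\delta_{ij}C_k-\delta_{ik}C_j$ collapses: taking $i=j<n$, $k<n$, $k\ne i$ (possible since $n-1\ge 2$) forces $C_k=0$ for all $k<n$, so the M\"obius form is $\Phi=C_n\omega_n$; taking $i=j<n$, $k=n$ gives $\omega_{in}(E_n)=0$. From the integrability relation $C_{i,j}-C_{j,i}=(b_i-b_j)A_{ij}$ one reads off that $\Phi$ is closed if and only if $A_{in}=0$ for every $i<n$, and evaluating it at $i<n$, $j=n$ with $C_{<n}\equiv 0$ and $\omega_{in}(E_n)=0$ yields $A_{in}=\pm E_i(C_n)$. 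Thus the whole theorem reduces to the implication: \emph{constant M\"obius scalar curvature $\Rightarrow E_i(C_n)=0$ for $i<n$}, after which Lin--Guo applies.

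Next I would bring in conformal flatness. Since conformal flatness is a conformal invariant, the M\"obius metric $g$ is conformally flat, so for $n\ge 4$ its Weyl tensor vanishes, hence its Cotton tensor vanishes and its Schouten tensor $P$ is a Codazzi tensor. The M\"obius Gauss equation writes $P_{ij}=A_{ij}-\tfrac1{n-2}(B^2)_{ij}+c\,\delta_{ij}$, where $c$ is constant exactly when the M\"obius scalar curvature $R$ is constant (because $\operatorname{tr}_gA=\tfrac1{2n}+\tfrac{R}{2(n-1)}$). Combining ``$P$ is Codazzi'' with the M\"obius Codazzi equation for $A$ and the explicit derivative $(B^2)_{in,k}=\tfrac{2-n}{n}\,\omega_{in}(E_k)$ yields $\omega_{in}(E_j)=\delta_{ij}C_n$ for $i,j<n$; together with $\omega_{in}(E_n)=0$ this says $\nabla_{E_n}E_n=0$ and $\nabla_{E_i}E_n=-C_nE_i$ for $i<n$. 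Hence $E_n$ is a unit geodesic field and the integrable distribution $V_1$ has totally umbilic leaves, so $g$ is locally a twisted product $g=dt^2+e^{2\phi(t,x)}\bar g(x)$ with $\partial_t\phi$ a multiple of $C_n$; the closure of $\Phi$ is now precisely the statement that $\partial_t\phi$ be independent of $x$, equivalently that $g$ be a genuine warped product $dt^2+e^{2\alpha(t)}\bar g$.

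Finally I would force that separation with the scalar-curvature hypothesis. Contracting the Codazzi equation for $A$ and using that $\operatorname{tr}_gA$ is constant gives that $\operatorname{div}_g(A)_j$ vanishes for $j<n$ (and is a multiple of $C_n$ for $j=n$); feeding into this the twisted-product ansatz, the Gauss equation (which expresses $A$ through the Ricci tensor of $g$), and the Ricci identity applied to $E_n$, one obtains a first-order system for $\phi$, its leaf-gradient and $C_n$. Conformal flatness of $g$ constrains $\bar g$ and $\phi$ (forcing, e.g., $\bar g$ to be of constant curvature and $\phi$ to obey a concircular-type equation on each leaf), and the constancy of $\operatorname{tr}_gA$ then pins down the $t$-dependence and kills the leaf-gradient of $\partial_t\phi$, giving $E_i(C_n)=0$ for $i<n$. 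So $\Phi$ is closed, and Lin--Guo's classification \cite{lin} identifies $f$, up to a M\"obius transformation, with a cone, a cylinder, or a rotational hypersurface (images under $\sigma$) over a curve in $\mathbb{S}^2$, $\mathbb{R}^2$, or $\mathbb{R}^2_+$; rewriting the now-constant M\"obius scalar curvature in terms of the generating curve turns it into the ODE whose solutions are the curvature-spirals. The delicate point — and the one I expect to be the main obstacle — is exactly the local implication ``$\operatorname{tr}_gA$ constant $\Rightarrow E_i(C_n)=0$'': since Theorem~\ref{the1} is purely local there is no integration by parts, so the vanishing of the leaf-gradient of $\partial_t\phi$ has to be extracted from the exact interplay of the Codazzi equations for $A$ and $B$, the Cotton-flatness of $g$, and the single equation $\operatorname{tr}_gA=\mathrm{const}$; this is made thorny by the fact that $B^2$ is not parallel — its covariant derivative is proportional to the very forms $\omega_{in}$ that appear in the $A$-Codazzi equation — so the argument comes down to careful bookkeeping with signs and index ranges, aided by the clean values $b_1-b_n=1$, $b_1^2-b_n^2=\tfrac{2-n}{n}$ and by $\omega_{in}(E_n)=0$.
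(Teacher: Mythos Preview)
Your overall architecture is right and matches the paper: reduce to the single question ``does the constant M\"obius scalar curvature force $A_{i n}=0$ (equivalently $E_i(C_n)=0$) for $i<n$?'', and once that is settled invoke Lin--Guo \cite{lin} together with the metric lemma for $g=\kappa^2(ds^2+I_{-\epsilon}^{n-1})$ to recover the curvature-spiral equation. Two remarks on the setup: (a) the identity $\omega_{in}(E_j)=\pm\delta_{ij}C_n$ already drops out of the $B$-Codazzi equation alone (compute $B_{\alpha\beta,k}$ and $B_{1\alpha,k}$ directly), so your detour through ``$P$ Codazzi $+$ $A$-Codazzi $+$ $(B^2)_{in,k}$'' is correct but unnecessary; (b) precisely because $P_{ij}=A_{ij}-\tfrac1{n-2}(B^2)_{ij}+c\,\delta_{ij}$, the Cotton-flatness of $g$ is \emph{implied} by the Gauss equation and the $A$-Codazzi equation, so ``$P$ is Codazzi'' is not an independent constraint in this setting and cannot by itself supply the missing implication.

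The genuine gap is exactly the step you flag as ``the main obstacle'': you never actually prove $E_i(C_n)=0$. The twisted-product/concircular paragraph is a hope, not an argument, and as noted the extra input you plan to use (Cotton-flatness of $g$) is automatic here. The paper closes this step by a short direct contradiction. First, from $R_{1\alpha1\alpha}=C_{1,1}-C_1^2$ one gets $a_2=\dots=a_n$ (in the paper's indexing the simple eigenvector is $E_1$), so after a rotation in the $(n-1)$-eigenspace only $A_{12}$ can be nonzero, with $A_{12}=C_{1,2}$. Assuming $A_{12}\neq0$, the relations $A_{1\alpha,\beta}=0$ for $\alpha\neq\beta$, $\alpha\ge3$, force $\omega_{2\alpha}=\mu\,\omega_\alpha$ with $\mu=E_2(A_{12})/A_{12}$; computing $d\omega_{2\alpha}$ gives $E_1(\mu)-\mu C_1=-A_{12}$, while differentiating $\mu$ directly using $E_1(A_{12})=(n+1)A_{12}C_1$ and the bracket $[E_1,E_2]=C_1E_2$ gives $E_1(\mu)-\mu C_1=(n+1)C_{1,2}$. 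Hence $(n+1)C_{1,2}=-A_{12}=-C_{1,2}$, so $C_{1,2}=0$ and $A_{12}=0$, a contradiction. This is the concrete ``bookkeeping with signs and index ranges'' you were anticipating; your proposal stops just short of carrying it out.
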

Here the so-called \emph{curvature-spiral} in a 2-dimensional space form
$N^2(\epsilon)=\mathbb{S}^2,\mathbb{R}^2,\mathbb{R}^2_+$ (of Gaussian curvature $\epsilon=1,0,-1$ respectively)
is determined by the intrinsic equation
\begin{equation}\label{spiral}
 -\frac{\kappa_{ss}}{\kappa^3}+\frac{(n+2)\kappa_s^2}{2\kappa^4}+\epsilon\frac{n-2}{2\kappa^2}
=R,~~~~\kappa_s=\frac{d}{ds}\kappa.
\end{equation}
Here $s$ is the arc-length parameter, $\kappa$ denotes the
geodesic curvature of the curve $\gamma$, and $R$ is a real constant. In \cite{guo}, authors classified locally the hypersurfaces with constant
M\"{o}bius sectional curvature, which is some special conformally flat hypersurfaces with M\"{o}bius scalar curvature by the equation (\ref{spiral}).

For compact conformally flat hypersurfaces, we obtain the following M\"{o}bius scalar curvature rigidity theorem, which means that the closed curve in $\mathbb{R}^2_+$ satisfying the intrinsic equation (\ref{spiral}) with geodesic curvature $\kappa>0$ is circle  $\mathbb{S}^1$.
\begin{theorem}\label{the2}
Let $f:M^n\rightarrow \mathbb{S}^{n+1}$, $n\geq 4$, be a compact conformally flat hypersurface
without umbilical points everywhere. If the M\"{o}bius scalar curvature $R$ is constant,
then $$R=(n-1)(n-2)r^2, ~~0<r<1,$$ and the compact conformally flat hypersurface is M\"{o}bius equivalent to the torus
$$f:\mathbb{ S}^1(\sqrt{1-r^2})\times \mathbb{S}^{n-1}(r)\to \mathbb{S}^{n+1}.$$
\end{theorem}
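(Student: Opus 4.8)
The plan is to deduce Theorem \ref{the2} from the local classification of Theorem \ref{the1} by invoking compactness twice: first to determine the type of standard example, then to pin down the profile curve. Since $R$ is constant, Theorem \ref{the1} gives that $f$ is M\"obius equivalent to one of the models (i), (ii), (iii), each a hypersurface built over a curvature-spiral $\gamma$ satisfying \eqref{spiral}. First I would eliminate (i) and (ii) when $M^n$ is compact. The cylinder over a curve in $\mathbb{R}^2$ and the cone over a curve in $\mathbb{S}^2$ contain $n-1$ parallel Euclidean directions (the $\mathbb{R}^{n-1}$ factor, respectively the $\mathbb{R}^{n-2}$ factor together with the radial ray), hence are noncompact; their $\sigma$-images in $\mathbb{S}^{n+1}$ are proper open subsets. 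By the classification $f(M^n)$ is an open subset of the connected standard hypersurface, and being closed too (by compactness of $M^n$) it must coincide with it, which would force that hypersurface to be compact --- excluding (i) and (ii). Hence $f$ is M\"obius equivalent to a rotational hypersurface of type (iii) over a curvature-spiral $\gamma$ in $\mathbb{R}^2_+$; since such a hypersurface is diffeomorphic to a warped product $\gamma\times\mathbb{S}^{n-1}$, its compactness amounts to $\gamma$ being a \emph{closed} curve in $\mathbb{R}^2_+$.

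Next I would study \eqref{spiral} with $\epsilon=-1$ for closed solutions. Since $f$ has no umbilic points and $\gamma$ is a curvature-spiral, $\kappa\neq0$ along $\gamma$, so after fixing the orientation we may take $\kappa>0$. Rewriting \eqref{spiral} as $\kappa_{ss}=\frac{(n+2)\kappa_s^2}{2\kappa}-\frac{(n-2)\kappa}{2}-R\kappa^3$ and evaluating it where $\kappa$ attains its maximum ($\kappa_s=0$, $\kappa_{ss}\le0$) and where it attains its minimum ($\kappa_s=0$, $\kappa_{ss}\ge0$) gives $R\kappa_{\max}^2\ge-\frac{n-2}{2}\ge R\kappa_{\min}^2$. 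As $n\ge4$, the middle quantity is negative, forcing $R<0$ and then $\kappa_{\max}^2\le\kappa_{\min}^2$, so $\kappa$ is constant. It is precisely the hyperbolic case $\epsilon=-1$ --- the one selected by compactness --- that makes this maximum-principle argument close; for $\epsilon=0,1$ it fails, consistent with complete noncompact examples existing there. A closed curve of constant geodesic curvature in $\mathbb{R}^2_+$ is a hyperbolic circle, so $\gamma$ is a circle.

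Finally I would identify the hypersurface. A rotational hypersurface over a circle in $\mathbb{R}^2_+$ is, after the conformal identifications, the standard product torus $\mathbb{S}^1(\sqrt{1-r^2})\times\mathbb{S}^{n-1}(r)\hookrightarrow\mathbb{S}^{n+1}$ for a unique $r\in(0,1)$; this torus is M\"obius isoparametric, its M\"obius metric being a constant multiple of the product metric. A direct computation of its M\"obius scalar curvature (equivalently, tracking the normalization relating it to the constant in \eqref{spiral}) then yields $R=(n-1)(n-2)r^2$ with $0<r<1$, completing the proof.

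The step I expect to be the main obstacle is the maximum-principle analysis of \eqref{spiral}: it uses both that $\kappa$ is globally positive --- supplied by the no-umbilics hypothesis and the meaning of a curvature-spiral --- and that the sign pattern of \eqref{spiral} for $\epsilon=-1$, $n\ge4$, is exactly what is needed to force $\kappa$ constant. A secondary difficulty is making rigorous the passage in the first step from ``$f$ is M\"obius equivalent to a standard example'' to ``$M^n$ compact implies the profile curve is closed'', in particular excluding compact pieces of the noncompact models, together with the concluding computation that identifies the explicit torus and the asserted value of $R$.
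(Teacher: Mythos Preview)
Your route is genuinely different from the paper's. The authors do not go through Theorem~\ref{the1} and the ODE \eqref{spiral} at all; instead they prove directly that the M\"obius form vanishes. They compute, under the frame of Lemma~\ref{le1}, all first and second covariant derivatives of $A$, $C$ and the Schouten tensor $S$, and then integrate a carefully chosen list of quantities over the compact $M^n$: the divergences of $X_C=\sum_iC_iE_i$, $X_S=\sum_{ij}C_iS_{ij}E_j$, $Y_S=\sum_{ijk}C_{i,j}S_{ij,k}E_k$, together with $\triangle|C|^2$, $\triangle|C|^4$, $\triangle|S|^2$, $\triangle(\sum_{ij}C_iA_{ij}C_j)$ and $\triangle(\sum_{ij}C_iC_{i,j}C_j)$. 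This produces a linear system in $\int C_{1,1}^3$, $\int C_1^2C_{1,1}^2$, $\int C_1^4C_{1,1}$, $\int C_1^6$, $\int C_1^4$ whose only solution is $\int C_1^4=0$, hence $C\equiv 0$. The hypersurface is then M\"obius isoparametric with two distinct principal curvatures, and the cited classification of such hypersurfaces singles out the torus among the three compact/noncompact models.

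Your maximum--minimum analysis of \eqref{spiral} is exactly the step that fails. Taking \eqref{spiral} with $\epsilon=-1$ at face value, your argument is internally correct and does force $\kappa$ constant --- but it simultaneously forces $R<0$, contradicting the theorem's conclusion $R=(n-1)(n-2)r^2>0$. That contradiction is telling you that the constant on the right of \eqref{spiral} is not literally the M\"obius scalar curvature (Lemma~\ref{metric} is stated without proof), or equivalently that the $\epsilon$-term carries the wrong sign relative to $R$. If you recompute the scalar curvature of $g=\kappa^2(ds^2+I_{\mathbb{S}^{n-1}})$ directly, at a critical point of $\kappa$ you get $-\kappa_{ss}/\kappa^3 + (n-2)/(2\kappa^2)$ equal to a \emph{positive} multiple of $R$; the inequalities at the maximum and the minimum then read $\kappa_{\min}^2\le\mathrm{const}\le\kappa_{\max}^2$, which is perfectly compatible with a nonconstant periodic $\kappa$. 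So the elementary max--min trick does not close in the rotational case, and one really needs a global device of the integral type the paper develops (or some other idea) to exclude nonconstant closed curvature-spirals in $\mathbb{R}^2_+$. Your secondary worry --- upgrading the local classification of Theorem~\ref{the1} to a global one before the ODE step --- is also a genuine issue, but it becomes moot once the ODE argument itself breaks.
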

\begin{remark}
Theorem \ref{the1} and Theorem \ref{the2} is true for $n=3$ provided that the $3$-dimensional conformally flat hypersurface has only two distinct principal
curvatures.
\end{remark}

The paper is organized as follows. In section 2, we review the
elementary facts about M\"{o}bius geometry of hypersurfaces in
$\mathbb{S}^{n+1}$. In section 3, we prove the theorem \ref{the1}. In section 4, we prove the theorem \ref{the2}.

\vskip 1 cm
\section{M\"{o}bius invariants of hypersurfaces in $\mathbb{S}^{n+1}$}
In this section, we recall some facts about the M\"{o}bius invariants of hypersurfaces in $\mathbb{S}^{n+1}$.
For details we refer to \cite{w}.

Let $f:M^{n}\rightarrow \mathbb{S}^{n+1}$ be a hypersurface without umbilical points. In this section we use the range of indices: $1\leq i,j,k,l\leq n.$ We
assume that $\{e_i\}$ is an orthonormal basis with respect to the
induced metric with $\{\theta_i\}$ the dual basis.
Let $II=\sum_{ij}h_{ij}\theta_i\theta_j$ and
$H=\sum_i\frac{h_{ii}}{n}$ be the second fundamental form and the
mean curvature of $x$, respectively. We define the M\"{o}bius metric $g$, the M\"{o}bius second fundamental form $B$, the Blaschke tensor $A$ and the M\"{o}bius form $C$
as follows, respectively,
\begin{equation}\label{moebius}
\begin{split}
g=&\rho^2dx\cdot dx,~~~~~~~\rho^2=\frac{n}{n-1}(|h|^2-nH^2),\\
B=&\rho\sum_{ij}(h_{ij}-H\delta_{ij})\theta_i\otimes\theta_j,\\
C=&-\rho^{-1}\sum_i[e_i(H)+\sum_j(h_{ij}-H\delta_{ij})e_j]\theta_i,\\
A=&\sum_{ij}\Big\{e_i(\log\rho)e_j(\log\rho)-\nabla_{e_i}\nabla_{e_j}\log\rho+Hh_{ij}+\\
&\frac{1}{2}[1-H^2-|\bigtriangledown\log\rho|^2]\delta_{ij}\Big\}\theta_i\otimes\theta_j.
\end{split}
\end{equation}
Note that the conformal compactification space
$\mathbb{S}^{n+1}$ unifies the space forms $\mathbb{S}^{n+1},$ $\mathbb{R}^{n+1},\mathbb{H}^{n+1}$ and the
formula above defining the M\"obius metric $g$ and the M\"{o}bius second fundamental form $B$ are the same for any of them.
\begin{theorem}$\cite{w}$\label{fundthe}
Two hypersurfaces $f: M^n\rightarrow \mathbb{S}^{n+1}$ and $\bar{f}:
M^n\rightarrow \mathbb{S}^{n+1} (n\geq 3)$ are M\"{o}bius equivalent if and
only if there exists a diffeomorphism $\varphi: M^n\rightarrow M^n$
which preserves the M\"{o}bius metric and the M\"{o}bius second
fundamental form.
\end{theorem}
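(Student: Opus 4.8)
The plan is to prove this congruence theorem by the method of moving frames in the Lorentz model of M\"obius geometry, following the standard recipe for fundamental theorems: build a canonical frame whose structure equations encode all the invariants, show that the hypotheses force \emph{every} invariant to agree, and then integrate the frame equations to produce the rigid motion. The ``only if'' direction is immediate: a M\"obius transformation of $\mathbb{S}^{n+1}$ acts as a Lorentz isometry on the light-cone lift and therefore preserves $g$ and $B$ by their very construction in (\ref{moebius}), the realizing diffeomorphism being the identification it induces on $M^n$. So the content lies entirely in the ``if'' direction.

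First I would recall the canonical conformal frame. Realizing $\mathbb{S}^{n+1}$ as the projectivized light cone in $\mathbb{R}^{n+3}_1$, the hypersurface $f$ lifts to the M\"obius position vector $Y=\rho(1,f)$, which satisfies $\langle Y,Y\rangle=0$ and $\langle dY,dY\rangle=g$. Adjoining the conjugate null field $N$ with $\langle Y,N\rangle=1$, the tangential fields $Y_i=e_i(Y)$, and the conformal normal $\xi$, one obtains a Lorentz frame $\{Y,N,Y_1,\dots,Y_n,\xi\}$ whose first-order structure equations express the derivatives of $Y,N,Y_i,\xi$ linearly in terms of $g$, the Blaschke tensor $A$, the M\"obius second fundamental form $B$, and the M\"obius form $C$. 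The integrability conditions of this linear system are precisely the M\"obius Gauss, Codazzi and Ricci equations.

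The crux is to show that $\varphi^{*}\bar g=g$ and $\varphi^{*}\bar B=B$ already force $\varphi^{*}\bar C=C$ and $\varphi^{*}\bar A=A$, so that the full set of invariants is preserved; this is where $n\geq3$ and the normalizations $\operatorname{tr}B=0$, $|B|^{2}=\tfrac{n-1}{n}$ enter. Contracting the M\"obius Codazzi equation $B_{ij,k}-B_{ik,j}=\delta_{ij}C_k-\delta_{ik}C_j$ over $i=j$ and using $\operatorname{tr}B=0$ gives
$$C_k=-\frac{1}{n-1}\sum_i B_{ik,i},$$
so $C$ is the normalized divergence of $B$ with respect to $g$, hence a function of $(g,B)$. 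Contracting the M\"obius Gauss equation, which writes the curvature tensor of $g$ through $A$ and $B$, yields the identity $R_{ik}=(n-2)A_{ik}+(\operatorname{tr}A)\delta_{ik}-(B^{2})_{ik}$, and one further trace fixes $\operatorname{tr}A=\tfrac{1}{2(n-1)}\bigl(R+|B|^{2}\bigr)$. For $n\geq3$ the linear part in $A$ is invertible, so
$$A_{ik}=\frac{1}{n-2}\Bigl[R_{ik}+(B^{2})_{ik}-(\operatorname{tr}A)\,\delta_{ik}\Bigr],$$
and $A$ is determined by the Ricci and scalar curvature of $g$ together with $B$. Thus $\varphi$ preserves $g,A,B,C$ simultaneously.

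Finally I would run the standard frame-integration argument. With all invariants matched under $\varphi$, the conformal frame of $f$ and the $\varphi$-pullback of the conformal frame of $\bar f$ satisfy one and the same linear first-order system. Fixing a base point $p_0$, let $T\in O(n+2,1)$ be the unique Lorentz map carrying the frame of $f$ at $p_0$ to that of $\bar f$ at $\varphi(p_0)$; then $T\cdot(\text{frame of }f)$ and $(\text{frame of }\bar f\circ\varphi)$ solve the same system with identical initial data, so by the uniqueness theorem for such systems they coincide on the connected manifold $M^n$. In particular $T\cdot Y=\bar Y\circ\varphi$, and since $T$ descends to a M\"obius transformation of $\mathbb{S}^{n+1}$, this exhibits $\bar f\circ\varphi=T\cdot f$, i.e.\ $f$ and $\bar f$ are M\"obius equivalent. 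The main obstacle is the preceding step, the recovery of $A$ and $C$ from $(g,B)$ through the M\"obius integrability equations; once that algebraic reduction is in hand, the congruence reduces to the routine moving-frame uniqueness theorem for the Lorentz group.
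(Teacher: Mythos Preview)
The paper does not supply a proof of this theorem; it is quoted from Wang's foundational paper \cite{w} and used as a black box. Your argument is correct and is essentially the one given there: recover $C$ from $(g,B)$ as the normalized divergence of $B$ via the contracted Codazzi equation (this is exactly the identity $\sum_j B_{ij,j}=-(n-1)C_i$ recorded in (\ref{ba})), recover $A$ from the Ricci curvature of $g$ and $B^2$ by contracting the Gauss equation (\ref{equa4}) (the invertibility of $A\mapsto (n-2)A+(\operatorname{tr}A)\,\mathrm{Id}$ is precisely where $n\geq 3$ enters), and then integrate the structure equations of the canonical Lorentz frame $\{Y,N,Y_i,\xi\}$ to produce the element of $O(n+2,1)$ realizing the M\"obius equivalence. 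One small point worth making explicit is that connectedness of $M^n$ is needed for the ODE uniqueness step; you invoke it tacitly. Otherwise there is nothing to compare against in the present paper, and your write-up would serve as an adequate proof of the cited result.
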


Let $E_i=\rho^{-1}e_i, \omega_i=\rho\theta_i$, then $\{E_1,\cdots,E_n\}$ is an orthonormal basis with respect to the
M\"{o}bius metric $g$ with the dual basis $\{\omega_1,\cdots,\omega_n\}$. Let $\{\omega_{ij}\}$ be the connection $1$-form of the
M\"{o}bius metric under the orthonormal basis $\{\omega_i\}$, and

$$
A=\sum_{ij}A_{ij}\omega_i\otimes\omega_j,~~
B=\sum_{ij}B_{ij}\omega_i\otimes\omega_j,~~
C=\sum_iC_i\omega_i.$$
The covariant derivative of
$C_i, A_{ij}, B_{ij}$ are defined by
\begin{eqnarray*}
&&\sum_jC_{i,j}\omega_j=dC_i+\sum_jC_j\omega_{ji},\\
&&\sum_kA_{ij,k}\omega_k=dA_{ij}+\sum_kA_{ik}\omega_{kj}+\sum_kA_{kj}\omega_{ki},\\
&&\sum_kB_{ij,k}\omega_k=dB_{ij}+\sum_kB_{ik}\omega_{kj}+\sum_kB_{kj}\omega_{ki}.
\end{eqnarray*}
The integrability conditions of the M\"{o}bius invariants are given
by
\begin{eqnarray}
&&A_{ij,k}-A_{ik,j}=B_{ik}C_j-B_{ij}C_k,\label{equa1}\\
&&C_{i,j}-C_{j,i}=\sum_k(B_{ik}A_{kj}-B_{jk}A_{ki}),\label{equa2}\\
&&B_{ij,k}-B_{ik,j}=\delta_{ij}C_k-\delta_{ik}C_j,\label{equa3}\\
&&R_{ijkl}=B_{ik}B_{jl}-B_{il}B_{jk}
+\delta_{ik}A_{jl}+\delta_{jl}A_{ik}
-\delta_{il}A_{jk}-\delta_{jk}A_{il},\label{equa4}.
\end{eqnarray}
where $R_{ijkl}$ denote the curvature tensor of $g$.
Moreover,
\begin{equation}\label{ba}
\begin{split}
&\sum_iB_{ii}=0, ~~\sum_{ij}(B_{ij})^2=\frac{n-1}{n},\\
&\sum_iA_{ii}=\frac{1}{2n}+\frac{R}{2(n-1)},~~\sum_jB_{ij,j}=-(n-1)C_i,
\end{split}
\end{equation}
where $R=\sum_{i>j}R_{ijij}$ is the M\"{o}bius scalar curvature.

By equation (\ref{equa2}), we have
\begin{equation}\label{cb}
dC=0\Leftrightarrow\sum_k(B_{ik}A_{kj}-B_{jk}A_{ki})=0,
\end{equation}
which implies that the matrix $(B_{ij})$ and $(A_{ij})$ can be diagonalizable simultaneously.

\vskip 1 cm
\section{Local geometry of conformally flat hypersurfaces}

In this section,  we will give the M\"{o}bius invariants of the standard examples of  conformally flat hypersurfaces in
$\mathbb{R}^{n+1}$.  Then we prove that the conformally flat hypersurfaces with constant M\"{o}bius scalar curvature
come from these examples.

A key observation is that the M\"obius metric of those standard examples are of the form
\[g=\kappa^2(s)\left(ds^2+I_{-\epsilon}^{n-1}\right)~,\]
where $I_{-\epsilon}^{n-1}$ is the metric of $n-1$ dimensional space form
of constant curvature $-\epsilon$. For such metric forms we have
\begin{lemma}\label{metric}
The metric $g=\kappa^2(s)(ds^2+I_{-\epsilon}^{n-1})$ given above is
of constant scalar curvature $R$ if and only if the function $\kappa(s)$ satisfies
$$ -\frac{\kappa_{ss}}{\kappa^3}+\frac{(n+2)\kappa_s^2}{2\kappa^4}+\epsilon\frac{n-2}{2\kappa^2}
=R,~~~~\kappa_s=\frac{d}{ds}\kappa.$$
\end{lemma}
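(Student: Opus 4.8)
The plan is to compute the scalar curvature of $g=\kappa^{2}(s)\,\hat g$ directly, where $\hat g:=ds^{2}+I^{n-1}_{-\epsilon}$ is the Riemannian product of the line $\mathbb{R}$ with the $(n-1)$-dimensional space form of constant curvature $-\epsilon$, and then read off the stated equivalence. The metric $g$ is a conformal deformation of $\hat g$ with conformal factor $e^{2\phi}=\kappa^{2}$, that is $\phi=\log\kappa$, a function of $s$ alone, so I would apply the standard formula for the change of scalar curvature under a conformal change of metric in dimension $n$,
\[
R_g=e^{-2\phi}\Bigl(R_{\hat g}-2(n-1)\,\Delta_{\hat g}\phi-(n-1)(n-2)\,|\nabla_{\hat g}\phi|^{2}_{\hat g}\Bigr),
\]
in the normalization fixed in Section 2.

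Two observations make the right-hand side explicit. First, $\hat g$ is the product of a flat line with a space form of constant curvature $-\epsilon$, so $R_{\hat g}$ equals the (constant) scalar curvature of that $(n-1)$-dimensional space form, a fixed multiple of $\epsilon$; in particular $R_{\hat g}$ is a genuine constant, which is exactly where the space-form hypothesis on the $I^{n-1}_{-\epsilon}$-factor enters. Second, because $\phi$ depends on $s$ only and $\partial_s$ is a unit parallel field for $\hat g$ orthogonal to the $I^{n-1}_{-\epsilon}$-factor, one has $\nabla_{\hat g}\phi=\phi_s\,\partial_s$, hence $|\nabla_{\hat g}\phi|^{2}_{\hat g}=\phi_s^{2}$ and $\Delta_{\hat g}\phi=\phi_{ss}$ (all Christoffel contributions from the $I^{n-1}_{-\epsilon}$-directions vanish since that part of $\hat g$ is independent of $s$). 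Substituting $\phi_s=\kappa_s/\kappa$ and $\phi_{ss}=\kappa_{ss}/\kappa-\kappa_s^{2}/\kappa^{2}$ and collecting the terms proportional to $\kappa_{ss}/\kappa^{3}$, $\kappa_s^{2}/\kappa^{4}$ and $\epsilon/\kappa^{2}$ then exhibits $R_g$ as an explicit function of $s$.

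Since $R_g$ is thus a function of $s$ alone, $g$ has constant scalar curvature precisely when this function is constant; denoting that constant by $R$ and dividing through by the overall numerical factor produces exactly the displayed equation, which is the intrinsic equation (\ref{spiral}) of the curvature-spiral, so the "if" and "only if" directions come out simultaneously. An equivalent route, closer to the moving-frame set-up of Section 2, is to pass from a $\hat g$-orthonormal coframe $\{ds,\theta^{1},\dots,\theta^{n-1}\}$ to the $g$-orthonormal coframe $\{\kappa\,ds,\kappa\theta^{1},\dots,\kappa\theta^{n-1}\}$, use the conformal transformation law $\omega_{0\alpha}=-\phi_s\,\theta^{\alpha}$, $\omega_{\alpha\beta}=\hat\omega_{\alpha\beta}$ of the connection forms, compute the curvature forms $\Omega_{0\alpha}$, $\Omega_{\alpha\beta}$, and sum the sectional curvatures $K_{0\alpha}$ and $K_{\alpha\beta}$. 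There is no genuine conceptual difficulty here; the one thing to watch is keeping the sign conventions for $R_g$ and for the curvature $-\epsilon$ of the $(n-1)$-factor consistent with those of Section 2, so that the coefficients $n+2$ and $n-2$ emerge exactly as in the statement.
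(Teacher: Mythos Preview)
Your proposal is correct. The paper itself omits the proof entirely, remarking only that ``this lemma is easy to prove using exterior differential forms''; your alternative route via the moving-frame computation is exactly what the authors have in mind, and your primary route through the conformal transformation law for the scalar curvature is an equally standard shortcut that reaches the same ODE with less bookkeeping. The only caution---which you already flag---is that the paper's normalization $R=\sum_{i>j}R_{ijij}$ (half the usual trace of Ricci) must be carried through so that the numerical coefficients come out as stated; apart from this there is nothing to add.
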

This lemma is easy to prove using exterior differential forms and we omit the proof
at here. Below we give the explicit construction of the standard examples of conformally flat hypersurfaces
as well as their M\"{o}bius metric.
\begin{ex}\label{ex1}
Let $\gamma:I\rightarrow \mathbb{R}^2$ be a regular curve, and
$s$ denote the arclength of $\gamma(s)$. we define cylinder  in
$\mathbb{R}^{n+1}$ over $\gamma$,
$$f(s,y)=(\gamma(s),y):I\times \mathbb{R}^{n-1}\longrightarrow \mathbb{R}^{n+1},$$
where $y:\mathbb{R}^{n-1}\longrightarrow \mathbb{R}^{n-1}$ is identical maping.
\end{ex}
The first fundamental form $I$ and the second
fundamental form $II$ of the cylinder $f$ are, respectively,
\[ I=ds^2+I_{\mathbb{R}^{n-1}}, \;\; II=\kappa ds^2~,\]
where $\kappa(s)$ is the geodesic curvature of $\gamma$,  $I_{\mathbb{R}^{n-1}}$ is the standard Euclidean metric of $\mathbb{R}^{n-1}$.
So we have
$(h_{ij})=\operatorname{diag}(\kappa,0,\cdots,0)~,
~H=\frac{\kappa}{n}~,~\rho=\kappa~.$
Thus the M\"{o}bius metric $g$ of the cylinder $f$ is
\[g=\rho^2 I=\kappa^2(ds^2+I_{\mathbb{R}^{n-1}}).\]
where $I_{\mathbb{R}^{n-1}}$ is the standard hyperbolic metric of $\mathbb{R}^{n-1}(-1)$.
Because  at least $n-1$ of the principal curvatures coincide
at each point, the cylinder $f$ is a conformally flat hypersurface.
When $\gamma=\mathbb{S}^1$, the cylinder $f$ is the isoparametric
hypersurface $\mathbb{S}^1\times \mathbb{R}^{n-1}\to \mathbb{R}^{n+1}$.

\begin{ex}\label{ex2}
Let $\gamma:I\rightarrow \mathbb{S}^2(1)\subset \mathbb{R}^3$ be a regular
curve, and $s$ denote the arclength of $\gamma(s)$. we define
cone in $R^{n+1}$ over $\gamma$,
$$f(s,t,y)=(t\gamma(s),y):I\times \mathbb{R}^{+}\times \mathbb{R}^{n-2}\longrightarrow \mathbb{R}^{n+1},$$
where $y:\mathbb{R}^{n-2}\longrightarrow \mathbb{R}^{n-2}$ is identical mapping and
$\mathbb{R}^{+}=\{t|t>0\}$.
\end{ex}
The first and second
fundamental forms of the cone $f$ are, respectively,
\[I=t^2ds^2+I_{R^{n-1}}~, \;\; II=t\kappa ds^2~.\]
So we have
$(h_{ij})=\operatorname{diag}
\left(\frac{\kappa}{t},0,\cdots,0\right)~,~H=\frac{\kappa}{nt}~,
~\rho=\frac{\kappa}{t}~.$
Thus the M\"{o}bius metric $g$ of the cone $f$ is
\[g=\rho^2I=\frac{\kappa^2}{t^2}\left(t^2ds^2+I_{\mathbb{R}^{n-1}}\right)
=\kappa^2(ds^2+I_{\mathbb{H}^{n-1}})~,\]
where $I_{\mathbb{H}^{n-1}}$ is the standard hyperbolic metric of $\mathbb{H}^{n-1}(-1)$.
Clearly the cone $f$ is a conformally flat hypersurface.  When $\gamma=\mathbb{S}^1$, the cone $f$ is the image of $\tau^{-1}\circ\sigma$ of the isoparametric
hypersurface $\mathbb{S}^1(r)\times \mathbb{H}^{n-1}(\sqrt{1+r^2})\to \mathbb{H}^{n+1}$.

\begin{ex}\label{ex3}
Let $\mathbb{R}^2_+=\{(x,y)\in \mathbb{R}^2|y>0\}$ be the upper half-space endowed with the standard
hyperbolic metric
\[
ds^2=\frac{1}{y^2}[dx^2+dy^2]~.
\]
Let  $\gamma=(x,y):I\longrightarrow \mathbb{R}^2_+$
be a regular curve, and $s$ denote the arclength of $\gamma(s)$. we define rotational hypersurface in $\mathbb{R}^{n+1}$ over $\gamma$,
\begin{equation*}
f:I\times \mathbb{S}^{n-1}\longrightarrow \mathbb{R}^{n+1},~~~~~
f(x,y,\theta)=(x,y\theta),
\end{equation*}
where $\theta:\mathbb{S}^{n-1}\longrightarrow \mathbb{R}^{n}$ is a standard immersion of a round sphere.
\end{ex}
In the Poincare half plane $\mathbb{R}^2_+$ we denote the covariant differential
of the hyperbolic metric as $D$. Choose orthonormal frames
$e_1=y\frac{\partial}{\partial x},e_2=y\frac{\partial}{\partial y}$. It is easy to find
\[D_{e_1}e_1=e_2~,~D_{e_1}e_2=-e_1~,~D_{e_2}e_1=D_{e_2}e_2=0.\]
For $\gamma(s)=((x(s),y(s))\subset \mathbb{R}^2_+$ let $x'$ denote derivative $\partial x/\partial s$ and so on. Choose the unit tangent vector $\alpha=\frac{1}{y}(x'(s)e_1+y'(s)e_2)$ and the unit normal vector $\beta=\frac{1}{y}(-y'(s)e_1+x'(s)e_2)$. The geodesic curvature is computed via
\[\kappa=\langle D_\alpha \alpha,\beta\rangle
=\frac{x'y''-x''y'}{y^2}+\frac{x'}{y}~.\]
After these preparation, we see that the rotational hypersurface
$f(x,y,\theta)=(x,y\theta)$ has differential
$df=(x'ds,y'\theta ds+y d\theta)$ and unit normal vector $\eta=\frac{1}{y}(-y',x'\theta).$
Thus the first and second
fundamental forms of hypersurface $f$ are, respectively,
\[I=df\cdot df=y^2(ds^2+I_{\mathbb{S}^{n-1}})~,
~ II=-df\cdot d\eta=(y\kappa-x')ds^2-x'I_{\mathbb{S}^{n-1}}~,\]
where $I_{\mathbb{S}^{n-1}}$ is the standard metric of $\mathbb{S}^{n-1}(1)$.
Thus principal curvatures are \newline
$\frac{\kappa y-x'}{y^2},\frac{-x'}{y^2},
\cdots,\frac{-x'}{y^2}.$
So $\rho=\frac{\kappa}{y}$, and the M\"{o}bius metric of $f$ is
\[g=\rho^2I=\kappa^2(ds^2+I_{\mathbb{S}^{n-1}}).\]

Clearly the hypersurface $f$ is  a conformally flat hypersurface. When $\gamma=\mathbb{S}^1$, the cone $f$ is the image of $\sigma$ of the isoparametric
hypersurface $\mathbb{S}^1(\sqrt{1-r^2})\times \mathbb{S}^{n-1}(r)\to \mathbb{S}^{n+1}$.

Next,  we compute the M\"{o}bius invariant of the conformally flat hypersurfaces. From (\ref{ba}),
We can choose a local orthonormal basis
$\{E_1,\cdots,E_n\}$ with respect to the M\"{o}bius metric $g$ such
that
$$(B_{ij})=diag(\frac{n-1}{n},\frac{-1}{n},\cdots,\frac{-1}{n}).$$
In the following section we make use of the following convention on the
ranges of indices:
$$1\leq i,j,k \leq n; ~~2\leq \alpha,\beta,\gamma \leq n.$$
Since $B_{\alpha\beta}=\frac{1}{n}\delta_{\alpha\beta}$, we can
rechoose a local orthonormal basis $\{E_1,\cdots,E_n\}$ with
respect to the M\"{o}bius metric $g$ such that
\begin{equation}\label{bba}
(B_{ij})=diag(\frac{n-1}{n},\frac{-1}{n},\cdots,\frac{-1}{n}),~~
(A_{ij})=\left(
  \begin{array}{ccccc}
    A_{11} & A_{12} & A_{13} & \cdots & A_{1n} \\
    A_{21} & a_2 & 0 & \cdots & 0 \\
    A_{31}& 0 & a_3& \cdots & 0 \\
    \vdots & \vdots & \vdots& \ddots & \vdots \\
   A_{n1} & 0 & 0 & \cdots &a_n \\
  \end{array}
\right)
\end{equation}
Let $\{\omega_1,\cdots,\omega_n\}$ be the dual basis, and $\{\omega_{ij}\}$ the connection forms.
\begin{lemma}\label{le1}
Let $f: M^n\rightarrow \mathbb{S}^{n+1}$ $(n\geq 4)$ be a conformally flat
hypersurface without umbilical points. If the M\"{o}bius scalar curvature is constant,  then we can choose a local orthonormal
basis $\{E_1,\cdots,E_n\}$ with
respect to the M\"{o}bius metric $g$ such that
\begin{equation}\label{bbcc}
(B_{ij})=diag\{\frac{n-1}{n},\frac{-1}{n},\cdots,\frac{-1}{n}\},~~
(A_{ij})=diag\{a_1,a_2,\cdots,a_2\}.
\end{equation}
Moreover, the distribution $\mathbb{D}=span\{E_2,\cdots,E_n\}$ is integrable.
\end{lemma}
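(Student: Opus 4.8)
The plan is to construct the adapted frame in three stages, using the conformal flatness of $f$ twice --- first through Cartan--Schouten to pin down $(B_{ij})$, then through the vanishing of the Weyl tensor of the M\"obius metric $g$ to close the M\"obius form --- and invoking the constancy of $R$ only at the very end.

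\textit{Stage 1 (the eigenvalues of $B$).} By the Cartan--Schouten theorem at least $n-1$ of the principal curvatures of $f$ coincide at each point, so $(B_{ij})$ has at most two distinct eigenvalues, one of multiplicity $\geq n-1$; together with $\sum_iB_{ii}=0$, $\sum_{ij}B_{ij}^2=\tfrac{n-1}{n}$ and $B\neq0$ this forces the eigenvalues to be $\tfrac{n-1}{n}$ of multiplicity $1$ and $-\tfrac1n$ of multiplicity $n-1$. I take $E_1$ along the simple eigendirection, which diagonalizes $B$; since $B$ restricted to $\mathbb{D}=\mathrm{span}\{E_2,\dots,E_n\}$ is $-\tfrac1n$ times the identity, an $SO(n-1)$ rotation inside $\mathbb{D}$ diagonalizes the block $(A_{\alpha\beta})$, yielding the normal form $(\ref{bba})$.

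\textit{Stage 2 (Codazzi for $B$ and integrability of $\mathbb{D}$).} The eigenvalues of $B$ being constant, every $dB_{ij}$ involves only connection forms, so $(\ref{equa3})$ becomes a system of linear relations among the $\omega_{ij}$; solving it gives $\omega_{\alpha1}=C_\alpha\omega_1+C_1\omega_\alpha$ for every $\alpha$, with $C_\alpha=\omega_{\alpha1}(E_1)$ and $C_1=\omega_{\alpha1}(E_\alpha)$ (independent of $\alpha$). A short computation then gives $d\omega_1=\sum_\alpha C_\alpha\,\omega_1\wedge\omega_\alpha$, so $d\omega_1\wedge\omega_1=0$ and $\mathbb{D}=\ker\omega_1$ is integrable by Frobenius; this already proves the last assertion.

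\textit{Stage 3 (closing $C$ and equalizing the $a_\alpha$).} Since $f$ is conformally flat, so is $g$, and for $n\geq4$ its Weyl tensor vanishes. Writing $B_{ij}=-\tfrac1n\delta_{ij}+\delta_{i1}\delta_{j1}$, the quadratic part $B_{ik}B_{jl}-B_{il}B_{jk}$ of $(\ref{equa4})$ collapses to $\delta_{ik}S_{jl}+\delta_{jl}S_{ik}-\delta_{il}S_{jk}-\delta_{jk}S_{il}$ with $S_{ij}=-\tfrac1n\delta_{i1}\delta_{j1}+\tfrac1{2n^2}\delta_{ij}$, so $(\ref{equa4})$ reads $R_{ijkl}=\delta_{ik}\widetilde A_{jl}+\delta_{jl}\widetilde A_{ik}-\delta_{il}\widetilde A_{jk}-\delta_{jk}\widetilde A_{il}$ with $\widetilde A_{ij}=A_{ij}+S_{ij}$; in particular $\widetilde A$ is (proportional to) the Schouten tensor of $g$, and for $n\geq4$ the vanishing of the Weyl tensor forces the Cotton tensor to vanish, i.e.\ $\widetilde A_{ij,k}=\widetilde A_{ik,j}$. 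Because $\nabla(\delta_{i1}\delta_{j1})$ has the same components as $\nabla B$, expanding this Codazzi identity with $(\ref{equa1})$ and $(\ref{equa3})$ gives $\widetilde A_{ij,k}-\widetilde A_{ik,j}=\delta_{i1}\delta_{k1}C_j-\delta_{i1}\delta_{j1}C_k$, and the choice $i=k=1$, $j=\alpha$ yields $C_\alpha=0$. Thus $C=C_1\omega_1$, $\omega_{\alpha1}=C_1\omega_\alpha$, the leaves of $\mathbb{D}$ are umbilic with mean curvature $-C_1$, and the $E_1$-curves are geodesics. Only now does the constancy of $R$ enter: by $(\ref{ba})$ the trace $\sum_iA_{ii}$ is constant, and from $(\ref{equa2})$ with $i=1$, $j=\alpha$ one gets $A_{1\alpha}=C_{1,\alpha}-C_{\alpha,1}=E_\alpha(C_1)$. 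To conclude, I would differentiate $\sum_iA_{ii}=\text{const}$, substitute into the Codazzi equations $(\ref{equa1})$ for $A$ (using the now-simplified connection forms and the three-index identities available since $n\geq4$), and close the resulting first-order system by a Ricci-identity argument for $\widetilde A$; the outcome should be that the off-diagonal functions $A_{1\alpha}=E_\alpha(C_1)$ all vanish --- so $dC=0$ and $E_1$ is also an eigendirection of $A$ --- and that $a_2=\dots=a_n$. The geometric meaning is that constant M\"obius scalar curvature promotes the umbilic foliation $\mathbb{D}$ to a warped product $g=\kappa(s)^2\!\left(ds^2+I_{-\epsilon}^{n-1}\right)$: the mean curvature $-C_1$ becomes constant along each leaf, and each leaf becomes a space form, whose leaf sectional curvatures $\tfrac1{n^2}+C_1^2+a_\alpha+a_\beta$ (by the Gauss equation of the leaf in $(M,g)$) can be independent of $\alpha,\beta$ only when all $a_\alpha$ coincide, since $n-1\geq3$. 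I expect this last step --- in particular handling the internal connection coefficients $\Gamma^\gamma_{\alpha\beta}$ of $\mathbb{D}$ and showing the genuinely off-diagonal terms cancel --- to be the main obstacle.
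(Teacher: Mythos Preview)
Your Stages 1--2 and the Cotton-tensor route to $C_\alpha=0$ in Stage~3 are valid; the latter is an elegant alternative to the paper's more elementary derivation, where one simply notes that the Codazzi equation $B_{\alpha\gamma,\alpha}-B_{\alpha\alpha,\gamma}=-C_\gamma$ (with $\alpha\ne\gamma$, both $\ge2$) already forces $C_\gamma=0$, since the left side vanishes identically. You also place the equality $a_2=\cdots=a_n$ under the constant-$R$ hypothesis, but it comes for free: once $C_\alpha=0$ gives $\omega_{1\alpha}=-C_1\omega_\alpha$, the structure equation for $d\omega_{1\alpha}$ yields $R_{1\alpha1\alpha}=C_{1,1}-C_1^2$, and comparison with the Gauss-type identity $R_{1\alpha1\alpha}=-\tfrac{n-1}{n^2}+A_{11}+a_\alpha$ shows $a_\alpha$ is independent of~$\alpha$. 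Your Cotton identity cannot do more than this: unwinding it, the condition $\widetilde A_{ij,k}=\widetilde A_{ik,j}$ is exactly $\delta_{i1}(\delta_{k1}C_j-\delta_{j1}C_k)=0$, i.e.\ just $C_\alpha=0$.

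The genuine gap is the vanishing of $A_{1\alpha}=E_\alpha(C_1)$, which you only sketch. The ``leaf is a space form'' heuristic is circular --- the warped-product structure is precisely what is to be proved --- and a generic ``Ricci identity closure'' does not suffice. The paper's argument is specific: rotate inside $\mathbb D$ so that only $A_{12}$ may be nonzero; from the covariant derivatives of $A$ together with the constancy of $\operatorname{tr}A$ (this is where $R=\mathrm{const}$ enters, via $E_\alpha(A_{11})=-(n-1)E_\alpha(a_2)$) one extracts $A_{12}\,\omega_{2\beta}(E_k)=0$ for $k\ne\beta$ and $E_2(A_{12})=A_{12}\,\omega_{2\beta}(E_\beta)$. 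If $A_{12}\neq0$ this forces $\omega_{2\beta}=\mu\,\omega_\beta$ with $\mu=E_2(A_{12})/A_{12}$. Computing $E_1(\mu)$ two ways --- once from the curvature form $d\omega_{2\beta}$ (using $R_{2\beta1\beta}=A_{12}$), once by direct differentiation using $E_1(A_{12})=(n+1)A_{12}C_1$ and the bracket $[E_1,E_2]=C_1E_2$ --- yields $(n+1)C_{1,2}=-A_{12}$. Since one already knows $A_{12}=C_{1,2}$, this is a contradiction, and hence $A_{12}=0$. This contradiction is the heart of the lemma and is the step your proposal does not supply.
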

\begin{proof}
 Using
$dB_{ij}+\sum_kB_{kj}\omega_{ki}+\sum_kB_{ik}\omega_{kj}=\sum_kB_{ij,k}\omega_k$
, the equation (\ref{equa3}), we get
\begin{equation}\label{a1}
\begin{split}
&B_{1\alpha,\alpha}=-C_1, ~~otherwise, ~~~B_{ij,k}=0;\\
&\omega_{1\alpha}=-C_1\omega_{\alpha},~~~~C_{\alpha}=0.
\end{split}
\end{equation}

Thus $d\omega_1=0$ and the distribution $\mathbb{D}=span\{E_2,\cdots,E_n\}$ is
integrable.

Using $dC_i+\sum_kC_k\omega_{ki}=\sum_kC_{i,k}\omega_k$ and
(\ref{a1}), we can obtain
\begin{equation}\label{c2}
C_{\alpha,\alpha}=-C_1^2, ~~~C_{\alpha,k}=0,\alpha\neq k.
\end{equation}
From (\ref{a1}),
\begin{equation*}
\begin{split}
&d\omega_{1\alpha}=-dC_1\wedge\omega_{\alpha}-C_1d\omega_{\alpha}\\
&=-dC_1\wedge\omega_{\alpha}+C_1^2\omega_1\wedge\omega_{\alpha}-C_1\sum_{\gamma}\omega_{\gamma}\wedge\omega_{\gamma\alpha},
\end{split}
\end{equation*}
and
$d\omega_{1\alpha}-\sum_j\omega_{1j}\wedge\omega_{j\alpha}=-\frac{1}{2}\sum_{kl}R_{1\alpha
kl}\omega_k\wedge\omega_l$, we get that
\begin{equation}\label{c1}
R_{1\alpha
1\alpha}=C_{1,1}-C_1^2,~~~~R_{1\alpha\beta\alpha}-C_{1,\beta}=0.
\end{equation}
Since $R_{1\alpha
1\alpha}=-\frac{n-1}{n^2}+a_1+a_{\alpha}=C_{1,1}-C_1^2$ and
$R_{1\alpha\beta\alpha}=A_{1\beta},\alpha\neq\beta$, thus we have
\begin{equation}\label{a2}
a_2=a_3=\cdots=a_n,~~~~ A_{1\beta}=C_{1,\beta}.
\end{equation}
Thus $A|_{\mathbb{D}}=aI,~~a=a_2$. Since $E_1$ is principal vector field, then vector $E=A_{12}E_2+\cdots+A_{1n}E_n$ is well defined.
If $E=0$, then $A_{12}=\cdots=A_{1n}=0$. If $E\neq 0$,
we can rechoose a local orthonormal basis  $\{\tilde{E}_2=\frac{E}{|E|},\tilde{E}_3,\cdots,\tilde{E}_n\}$ of $\mathbb{D}$ with
respect to the M\"{o}bius metric $g$ such that
\begin{equation}\label{bba}
(B_{ij})=diag(\frac{n-1}{n},\frac{-1}{n},\cdots,\frac{-1}{n}),~~
(A_{ij})=\left(
  \begin{array}{ccccc}
    A_{11} & A_{12} & 0 & \cdots & 0 \\
    A_{21} & a_2 & 0 & \cdots & 0 \\
    0& 0 & a_2& \cdots & 0 \\
    \vdots & \vdots & \vdots& \ddots & \vdots \\
   0 & 0 & 0 & \cdots &a_2 \\
  \end{array}
\right)
\end{equation}

To finish the proof of the Lemma, we need to prove that $A_{12}=0$.
Using $dA_{ij}+\sum_kA_{kj}\omega_{ki}+\sum_kA_{ik}\omega_{kj}=\sum_kA_{ij,k}\omega_k$
, the equation (\ref{equa1}) and (\ref{bba}), we get
\begin{equation}\label{aa}
\begin{split}
&\sum_mA_{12,m}\omega_m=dA_{12}+(A_{11}-A_{22})\omega_{12},\\
&\sum_mA_{1\alpha,m}\omega_m=(A_{11}-a_2)\omega_{1\alpha}+A_{12}\omega_{2\alpha},~~\sum_mA_{2\alpha,m}\omega_m=A_{12}\omega_{1\alpha},~~\alpha\geq3,\\
&\sum_kA_{11,k}\omega_k=dA_{11}+2A_{12}\omega_{21}, ~~\sum_kA_{22,k}\omega_k=dA_{22}+2A_{12}\omega_{12},\\
&\sum_kA_{\alpha\alpha,k}\omega_k=dA_{\alpha\alpha},~~A_{\alpha\beta,k}=0, ~~\alpha\neq\beta, ~~\alpha,\beta\geq 3.
\end{split}
\end{equation}
From the fourth and seventh equation in (\ref{aa}), we get
\begin{equation}\label{aad}
E_{\alpha}(a_2)=A_{\beta\beta,\alpha}=A_{\beta\alpha,\beta}=0, ~~~\alpha\geq 3.
\end{equation}
Since the M\"{o}bius scalar curvature is constant, $tr(A)=A_{11}+(n-1)a_2$ is constant. Thus
\begin{equation}\label{aad1}
A_{1\alpha,1}=A_{11,\alpha}=E_{\alpha}(A_{11})=0,~~~\alpha\geq 3.
\end{equation}
From the first, second and third  equation in (\ref{aa}), we get
\begin{equation}\label{aad2}
A_{12,2}=E_2(A_{12})-(A_{11}-a_2)C_1,~~~A_{1\beta,\beta}=-(A_{11}-a_2)C_1+A_{12}\omega_{2\beta}(E_{\beta}).
\end{equation}
On the other hand, From (\ref{equa1}), we have
\begin{equation*}
\begin{split}
&E_1(A_{22})=A_{22,1}=A_{12,2}+\frac{1}{n}C_1=E_2(A_{12})-(A_{11}-a_2)C_1+\frac{1}{n}C_1,\\
&E_1(A_{\alpha\alpha})=A_{\alpha\alpha,1}=A_{1\alpha,\alpha}+\frac{1}{n}C_1=-(A_{11}-a_2)C_1+A_{12}\omega_{2\beta}(E_{\beta})+\frac{1}{n}C_1,\\
\end{split}
\end{equation*}
which implies that
\begin{equation}\label{2w}
E_2(A_{12})=A_{12}\omega_{2\beta}(E_{\beta}).
\end{equation}
Since $A_{1\alpha,\beta}=A_{\alpha\beta,1}=A_{1\beta,1}=A_{12,\alpha}=0,~\alpha\neq\beta$, from the second equation in (\ref{aa}) we can obtain
\begin{equation}\label{aa4}
A_{12}\omega_{2\beta}(E_k)=0, ~~\beta\geq 3,~~k\neq\beta.
\end{equation}
From the first and third  equation in (\ref{aa}), we get
\begin{equation}\label{aa1}
\begin{split}
E_2(a_2)&=E_2(A_{\beta\beta})=A_{\beta\beta,2}=A_{2\beta,\beta}=-A_{12}C_1,\\
E_1(A_{12})&=A_{12,1}=A_{11,2}=E_2(A_{11})+2A_{12}C_1\\
&=E_2(-(n-1)a_2)+2A_{12}C_1=(n+1)A_{12}C_1.
\end{split}
\end{equation}
Now we assume that $A_{12}\neq 0,$ From (\ref{2w}) and (\ref{aa4}), we have
$$\omega_{2\alpha}=\frac{E_2(A_{12})}{A_{12}}\omega_{\alpha}:=\mu\omega_{\alpha},~~\alpha\geq 3.$$
Thus
\begin{equation*}
\begin{split}
&d\omega_{2\alpha}=d\mu\wedge\omega_{\alpha}+\mu d\omega_{\alpha}\\
&=d\mu\wedge\omega_{\alpha}-\mu C_1^2\omega_1\wedge\omega_{\alpha}+\mu^2\omega_2\wedge\omega_{\alpha}
+\mu\sum_{\gamma\geq3}\omega_{\gamma}\wedge\omega_{\gamma\alpha}.
\end{split}
\end{equation*}
Using
$d\omega_{2\alpha}-\sum_j\omega_{2j}\wedge\omega_{j\alpha}=-\frac{1}{2}\sum_{kl}R_{2\alpha
kl}\omega_k\wedge\omega_l$, we get that
$$E_(\mu)-\mu C_1=-A_{12}.$$
On the other hand, using (\ref{a1}) and (\ref{aa1}), we have
\begin{equation*}
\begin{split}
E_1(\mu)&=E_1\big[\frac{E_2(A_{12})}{A_{12}}\big]=\frac{E_1E_2(A_{12})}{A_{12}}-\frac{E_2(A_{12})E_1(A_{12})}{A_{12}^2}\\
&=\frac{E_1E_2(A_{12})}{A_{12}}-(n+1)\frac{E_2(A_{12})C_1}{A_{12}}\\
&=\frac{(E_2E_1+C_1E_2)(A_{12})}{A_{12}}-(n+1)\frac{E_2(A_{12})C_1}{A_{12}}\\
&=\frac{E_2[(n+1)A_{12}C_1]}{A_{12}}-n\frac{E_2(A_{12})C_1}{A_{12}}\\
&=(n+1)C_{1,2}+\frac{E_2(A_{12})C_1}{A_{12}},
\end{split}
\end{equation*}
which implies that
$$(n+1)C_{1,2}=-A_{12}.$$
This is a contradiction by $A_{12}=C_{1,2}$. Therefore $A_{12}=0$ and we finish the proof.
\end{proof}
By Lemma \ref{le1} and equation (\ref{cb}), we can derive that $dC=0$. Combining the results in \cite{lin} and Lemma \ref{metric} we finish the proof of Theorem \ref{the1}.

\section{Global rigidity of M\"{o}bius scalar curvature}
A hypersurface in $\mathbb{S}^{n+1}$ is called a M\"{o}bius isoparametric hypersurface if its M\"{o}bius form vanishes and
all the eigenvalues of the M\"{o}bius second fundamental
form $B$ with respect to $g$ are constants. In \cite{lih2}, authors gave the following classification theorem.
\begin{theorem}\cite{lih2}
Let $f: M^n\to \mathbb{S}^{n+1}$ be a M\"{o}bius isoparametric hypersurface with
two distinct principal curvatures. Then $f$ is M\"{o}bius equivalent to an open part of one of the
following M\"{o}bius isoparametric hypersurfaces in $\mathbb{S}^{n+1}$:\\
(i) the standard torus $\mathbb{S}^k(r)\times \mathbb{S}^{n-k}(\sqrt{1-r^2})$;\\
(ii) the image of $\sigma$ of the standard cylinder $\mathbb{S}^k(1)\times \mathbb{R}^{n-k}\subset \mathbb{R}^{n+1}$;\\
(iii) the image of $\tau$ of the standard $\mathbb{S}^k(r)\times \mathbb{H}^{n-k}(\sqrt{1+r^2})$ in $\mathbb{H}^{n+1}$.
\end{theorem}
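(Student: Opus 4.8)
The plan is to read the entire M\"obius structure $(g,A,B,C)$ of such an $f$ off the integrability equations \eqref{equa1}--\eqref{equa4} and the normalizations \eqref{ba}, show it forces $g$ to be locally a Riemannian product of two space forms carrying $C\equiv0$ and constant block-scalar tensors $A,B$, and then match the outcome against the explicit models by the fundamental theorem (Theorem~\ref{fundthe}). Since $f$ is M\"obius isoparametric with exactly two distinct principal curvatures, $B$ has two constant eigenvalues $b_1,b_2$ of multiplicities $m$ and $n-m$ ($1\le m\le n-1$); the relations $\sum_iB_{ii}=0$ and $\sum_{ij}B_{ij}^2=\tfrac{n-1}{n}$ in \eqref{ba} then force $b_1=\tfrac{\sqrt{(n-1)(n-m)}}{n\sqrt m}$, $b_2=-\tfrac{\sqrt{(n-1)m}}{n\sqrt{n-m}}$, so $b_1\ne b_2$ and $b_1b_2=-\tfrac{n-1}{n^2}$. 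Because the M\"obius form vanishes, \eqref{cb} shows $A$ and $B$ commute, so I may fix a local $g$-orthonormal frame $\{E_i\}$ diagonalizing both at once; write $V_1=\operatorname{span}\{E_i: B_{ii}=b_1\}$, $V_2=\operatorname{span}\{E_i: B_{ii}=b_2\}$, and use indices $\alpha,\beta$ for $V_1$ and $p,q$ for $V_2$.

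The structural core is next. With $C=0$, the Codazzi identity \eqref{equa3} makes $B_{ij,k}$ totally symmetric, and a short computation gives $B_{ij,k}=(b_i-b_j)\,\omega_{ij}(E_k)$; feeding this into the total symmetry forces $\omega_{\alpha p}=0$ for all $\alpha,p$, so $V_1,V_2$ are parallel distributions and $(M,g)$ is locally a Riemannian product $M_1^m\times M_2^{n-m}$. Now the Gauss equation \eqref{equa4}: vanishing of the mixed sectional curvatures, $0=R_{\alpha p\alpha p}=b_1b_2+A_{\alpha\alpha}+A_{pp}$, gives $A_{\alpha\alpha}+A_{pp}=\tfrac{n-1}{n^2}$ for all such pairs, so after diagonalizing $A$ inside each $V_i$ (which keeps $\omega_{\alpha p}=0$) one gets $A|_{V_1}=a_1\,\mathrm{Id}$, $A|_{V_2}=a_2\,\mathrm{Id}$ with $a_1+a_2=\tfrac{n-1}{n^2}$; the intra-block components $R_{\alpha\beta\alpha\beta}=b_1^2+2a_1$ and $R_{pqpq}=b_2^2+2a_2$ then show $M_1,M_2$ have pointwise constant sectional curvatures $c_1:=b_1^2+2a_1$, $c_2:=b_2^2+2a_2$, with $c_1+c_2=(b_1-b_2)^2>0$. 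Finally the other Codazzi identity \eqref{equa1} with $C=0$ (so $A_{ij,k}$ totally symmetric) gives $E_p(a_1)=A_{\alpha\alpha,p}=A_{\alpha p,\alpha}=0$, and likewise $E_\alpha(a_2)=0$; since $a_1+a_2$ is constant, $a_1$ and $a_2$ are global constants, hence $c_1,c_2$ are constants and $M_1=N^m(c_1)$, $M_2=N^{n-m}(c_2)$ are space forms with $g$ their product metric.

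It remains to recognize $f$. Each model in (i)--(iii) is an isoparametric hypersurface of a space form, hence M\"obius isoparametric with $C=0$, and a direct computation of its M\"obius invariants (in the style of the $\gamma=\mathbb{S}^1$ computations in Section~3) puts its M\"obius metric in the product form above with $B$-eigenvalues $b_1,b_2$ and $A=a_1\,\mathrm{Id}\oplus a_2\,\mathrm{Id}$, the sign pattern of $(c_1,c_2)$ distinguishing the families: both entries $>0$ is the torus (i), one entry $=0$ is the cylinder image (ii), one entry $<0$ (the other then forced $>0$) is the $\mathbb{H}$-type (iii). Since $c_1+c_2=(b_1-b_2)^2>0$ these are the only patterns that occur, and within each family the model realizes the full range of $(c_1,c_2)$ with that pattern, so the particular data extracted from $f$ matches that of exactly one model; Theorem~\ref{fundthe} then identifies $f$ with an open part of it, and the parameter ranges (e.g. $0<r<1$ in (i)) are precisely the images of the corresponding positivity conditions. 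The main obstacle I anticipate is exactly this matching step: one must compute the M\"obius invariants of the three model families honestly, run the sign analysis of $(c_1,c_2)$, and deal with the degenerate multiplicities $m=1$ and $m=n-1$, where a factor $M_i$ is only a curve and $c_i$ is a bookkeeping constant rather than a curvature, so that the family is pinned down by the remaining ($\ge2$-dimensional) factor. Granting Theorem~\ref{fundthe} and the machinery of Section~2, the structure-equation work in the first two paragraphs is routine.
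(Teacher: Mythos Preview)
The paper does not prove this theorem at all: it is quoted verbatim from \cite{lih2} and used as a black box in the proof of Theorem~\ref{the2} (once $C=0$ has been established). So there is no ``paper's own proof'' to compare against.

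That said, your sketch is the standard route and is essentially correct. The key structural steps --- using $C=0$ and the total symmetry of $B_{ij,k}$ to get $\omega_{\alpha p}=0$, then the mixed Gauss equations to force $A=a_1\,\mathrm{Id}\oplus a_2\,\mathrm{Id}$ with $a_1+a_2=\tfrac{n-1}{n^2}$, and finally the Codazzi identity for $A$ to make $a_1,a_2$ constant --- are exactly how one proves this in \cite{lih2}, and your computations check out. One small point worth tightening: the conclusion $A|_{V_i}=a_i\,\mathrm{Id}$ follows from $A_{\alpha\alpha}+A_{pp}=\tfrac{n-1}{n^2}$ only after you know $A$ is block-diagonal with respect to $V_1\oplus V_2$, which you get from $[A,B]=0$ and $b_1\ne b_2$; you might state that explicitly rather than burying it in the phrase ``after diagonalizing $A$ inside each $V_i$''. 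You correctly flag the genuine work that remains, namely the honest computation of the M\"obius invariants of the three model families and the bookkeeping in the $m=1$ or $m=n-1$ cases; nothing there is deep, but it is where errors creep in, and it is also where the parameter correspondences (e.g.\ how $r$ in case~(i) encodes $(c_1,c_2)$) actually get pinned down.
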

To prove Theorem \ref{the2}, we only need to prove $C=0$. The way of the proof  is to use divergence theorem. First, we need some local computation.
\begin{lemma}\label{le2}
Let $f: M^n\rightarrow \mathbb{S}^{n+1}$ $(n\geq 4)$ be a conformally flat
hypersurface without umbilical points everywhere. If the M\"{o}bius scalar curvature $R$ is constant.
then under the local orthonormal
basis $\{E_1,\cdots,E_n\}$ in Lemma \ref{le1}, we have
\begin{equation}\label{aa11}
\begin{split}
&a_1=\frac{2n-1}{2n^2}-\frac{R}{2(n-1)(n-2)}+\frac{n-1}{n-2}(C_{1,1}-C_1^2),\\
&a_2=\frac{R}{2(n-1)(n-2)}-\frac{1}{2n^2}-\frac{1}{n-2}(C_{1,1}-C_1^2),\\
&A_{\alpha\alpha,1}=\frac{R}{(n-1)(n-2)}C_1-\frac{n}{n-2}(C_1C_{1,1}-C_1^3).
\end{split}
\end{equation}
Except these coefficients $A_{11,1}, A_{1\alpha,\alpha}$ and $A_{\alpha\alpha,1}$ the coefficients of $\nabla A$
are equal to zero.
\end{lemma}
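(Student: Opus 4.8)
The plan is to work throughout in the local orthonormal frame $\{E_1,\dots,E_n\}$ furnished by Lemma \ref{le1}, in which $(B_{ij})$ and $(A_{ij})$ are simultaneously diagonal, $C=C_1\omega_1$, $\omega_{1\alpha}=-C_1\omega_\alpha$, and (from the proof of that lemma) $A_{1\beta}=0$, whence $C_{1,\beta}=0$. Two linear relations then pin down $a_1$ and $a_2$. The first is the trace identity $a_1+(n-1)a_2=\tfrac1{2n}+\tfrac{R}{2(n-1)}$ from (\ref{ba}). The second comes from the curvature formula (\ref{equa4}): evaluated on the pair $(1,\alpha)$ it gives $R_{1\alpha1\alpha}=a_1+a_2-\tfrac{n-1}{n^2}$, while (\ref{c1}) — already established inside the proof of Lemma \ref{le1} — gives $R_{1\alpha1\alpha}=C_{1,1}-C_1^2$. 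Solving this $2\times2$ system yields the first two displayed formulas, and in particular $a_1-a_2=\tfrac1n-\tfrac{R}{(n-1)(n-2)}+\tfrac{n}{n-2}(C_{1,1}-C_1^2)$.

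Next I would read off every component of $\nabla A$ from $\sum_kA_{ij,k}\omega_k=dA_{ij}+\sum_kA_{ik}\omega_{kj}+\sum_kA_{kj}\omega_{ki}$, using the diagonality of $A$ and $\omega_{1\alpha}=-C_1\omega_\alpha$. The diagonal entries give $A_{11,k}=E_k(a_1)$ and $A_{\alpha\alpha,k}=E_k(a_2)$; the $(1,\alpha)$-entry gives $\sum_kA_{1\alpha,k}\omega_k=(a_1-a_2)\omega_{1\alpha}=-(a_1-a_2)C_1\omega_\alpha$, so $A_{1\alpha,\alpha}=-(a_1-a_2)C_1$ and $A_{1\alpha,k}=0$ for $k\neq\alpha$; and for $\alpha\neq\beta$ one obtains $\sum_kA_{\alpha\beta,k}\omega_k=a_2(\omega_{\alpha\beta}+\omega_{\beta\alpha})=0$, hence $A_{\alpha\beta,k}=0$. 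Feeding $(i,j,k)=(\alpha,\alpha,1)$ into the Codazzi-type identity (\ref{equa1}) gives $A_{\alpha\alpha,1}-A_{1\alpha,\alpha}=\tfrac1nC_1$, so $A_{\alpha\alpha,1}=-(a_1-a_2)C_1+\tfrac1nC_1$, and substituting the value of $a_1-a_2$ from the previous paragraph produces the third displayed formula.

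For the vanishing statement I would use (\ref{equa1}) once more: the choice $(i,j,k)=(1,1,\alpha)$ gives $A_{11,\alpha}=A_{1\alpha,1}$, which is $0$ by the computation above. Since $tr(A)=a_1+(n-1)a_2$ is constant, $E_\alpha(a_1)=-(n-1)E_\alpha(a_2)$; combined with $A_{11,\alpha}=E_\alpha(a_1)=0$ this forces $E_\alpha(a_2)=0$, hence $A_{\alpha\alpha,k}=E_k(a_2)=0$ for every $k\geq2$. Together with $A_{1\alpha,k}=0$ $(k\neq\alpha)$ and $A_{\alpha\beta,k}=0$ $(\alpha\neq\beta)$, this shows the only components of $\nabla A$ that may fail to vanish are $A_{11,1}$, $A_{1\alpha,\alpha}$ and $A_{\alpha\alpha,1}$.

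None of these steps is deep; once the frame of Lemma \ref{le1} is fixed the computations are routine. The points requiring care are purely bookkeeping: extracting $A_{1\alpha,k}$ and $R_{1\alpha1\alpha}$ from the structure equations with the correct connection-form substitutions, observing that (\ref{equa4}) makes $a_1+a_2$ affine in $C_{1,1}-C_1^2$ (which is exactly what renders the linear system solvable), and noting that the constancy of $tr(A)$ is precisely what annihilates the leafwise derivatives of $a_2$ and thereby isolates the three surviving covariant-derivative components.
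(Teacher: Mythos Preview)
Your proposal is correct and follows essentially the same approach as the paper: derive $a_1,a_2$ from the trace identity in (\ref{ba}) together with $R_{1\alpha1\alpha}=C_{1,1}-C_1^2$ from (\ref{c1}) and (\ref{equa4}), obtain $A_{1\alpha,\alpha}=(a_2-a_1)C_1$ from the covariant-derivative formula, use (\ref{equa1}) to get $A_{\alpha\alpha,1}=A_{1\alpha,\alpha}+\tfrac1nC_1$, and infer the vanishing of the remaining components from diagonality of $A$ and constancy of $tr(A)$. The only cosmetic difference is that the paper cites the equations (\ref{aa})--(\ref{aad1}) already recorded inside the proof of Lemma~\ref{le1}, whereas you recompute the components of $\nabla A$ directly in the final diagonal frame.
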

\begin{proof}
The first and second equation in  (\ref{aa11}) can derive directly from the equation $tr(A)=a_1+(n-1)a_2=\frac{1}{2n}+\frac{R}{2(n-1)}$
and $R_{1\alpha1\alpha}=-\frac{n-1}{n^2}+a_1+a_2=C_{1,1}-C_1^2$ in (\ref{c1}).

From (\ref{aa}), we can get
\begin{equation}
A_{1\alpha,\alpha}=(a_2-a_1)C_1=[\frac{R}{(n-1)(n-2)}-\frac{1}{n}]C_1-\frac{n}{n-2}(C_1C_{1,1}-C_1^3).
\end{equation}
By (\ref{equa1}), we have $A_{\alpha\alpha,1}=A_{1\alpha,\alpha}+\frac{1}{n}C_1$. Combining above equation we get the third  equation in (\ref{aa11}).

Since $tr(A)=a_1+(n-1)a_2$ is constant, we have
$A_{11,1}=-(n-1)A_{\alpha\alpha,1}.$
Thus, by lemma \ref{le1}, we know that except these coefficients $A_{11,1}, A_{1\alpha,\alpha}$ and $A_{\alpha\alpha,1}$ the coefficients of $\nabla A$
are equal to zero.
\end{proof}
\begin{lemma}\label{lec2}
Let $f: M^n\rightarrow \mathbb{S}^{n+1}$ $(n\geq 4)$ be a conformally flat
hypersurface without umbilical points everywhere. If the M\"{o}bius scalar curvature $R$ is constant.
then under the local orthonormal
basis $\{E_1,\cdots,E_n\}$ in Lemma \ref{le1}, we have
\begin{equation}\label{cc11}
\begin{split}
&C_{1,11}=E_1(C_{1,1})=(n+2)C_1C_{1,1}-nC_1^3-\frac{R}{n-1}C_1,\\
&C_{\alpha,\alpha1}=E_1(C_{\alpha,\alpha})=-2C_1C_{1,1},~~C_{1,\alpha\alpha}=C_{\alpha,1\alpha}=-(C_{1,1}+C_1^2)C_1.
\end{split}
\end{equation}
Except these coefficients $C_{1,11}, C_{1,\alpha\alpha}$ and $C_{\alpha,\alpha1}$ the coefficients of $\nabla^2C$
are equal to zero.
\end{lemma}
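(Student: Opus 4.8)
The plan is to obtain every second-order identity for $C$ by differentiating the first-order identities already in hand. Under the frame of Lemma \ref{le1} we have $C=C_1\omega_1$, $C_\alpha=0$, $\omega_{1\alpha}=-C_1\omega_\alpha$ (hence $\omega_{\alpha1}=C_1\omega_\alpha$), and, by (\ref{a1})--(\ref{c2}), the only nonzero components of $\nabla C$ are $C_{1,1}$ and $C_{\alpha,\alpha}=-C_1^2$; moreover $dC_1=C_{1,1}\omega_1$, so $E_1(C_1)=C_{1,1}$ and $E_\alpha(C_1)=0$. All the identities then come out of the defining relation $\sum_kC_{i,jk}\omega_k=dC_{i,j}+\sum_mC_{m,j}\omega_{mi}+\sum_mC_{i,m}\omega_{mj}$.

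First I would dispatch the components that need no extra input. Feeding $i=j=1$ into the formula kills both connection sums (they involve only $\omega_{11}=0$), so $\sum_kC_{1,1k}\omega_k=dC_{1,1}$; hence $C_{1,11}=E_1(C_{1,1})$ and $C_{1,1\alpha}=E_\alpha(C_{1,1})$. Feeding $i=j=\alpha$ similarly gives $\sum_kC_{\alpha,\alpha k}\omega_k=d(-C_1^2)=-2C_1C_{1,1}\omega_1$, i.e. $C_{\alpha,\alpha1}=E_1(C_{\alpha,\alpha})=-2C_1C_{1,1}$ and $C_{\alpha,\alpha k}=0$ for $k\neq1$. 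For the mixed components one uses $C_{1,\alpha}=C_{\alpha,1}=0$, so the whole right-hand side reduces to the connection terms $\omega_{\alpha1},\omega_{1\alpha}$ acting on the diagonal entries $C_{\alpha,\alpha}=-C_1^2$ and $C_{1,1}$; a one-line computation gives $C_{1,\alpha\alpha}=C_{\alpha,1\alpha}=-(C_{1,1}+C_1^2)C_1$. The same bookkeeping shows every remaining component (in particular $C_{\alpha,\beta k}$ with $\alpha\neq\beta$) vanishes, because the two connection sums cancel.

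The only substantive step is the closed form for $C_{1,11}=E_1(C_{1,1})$. Rather than push the differentiation of $C$ further, I would differentiate the curvature relation (\ref{c1}), $C_{1,1}-C_1^2=R_{1\alpha1\alpha}=-\tfrac{n-1}{n^2}+a_1+a_2$ (equivalently, use the expressions for $a_1,a_2$ in (\ref{aa11})), along $E_1$. Since $E_1(C_1)=C_{1,1}$ this gives $C_{1,11}=E_1(a_1)+E_1(a_2)+2C_1C_{1,1}$. Because $A_{12}=0$ by Lemma \ref{le1}, the connection corrections in the equations for $dA_{ii}$ in (\ref{aa}) drop out, so $A_{\alpha\alpha,k}=E_k(a_2)$, whence $E_1(a_2)=A_{\alpha\alpha,1}$; and $tr(A)$ constant forces $E_1(a_1)=A_{11,1}=-(n-1)A_{\alpha\alpha,1}$. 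Therefore $C_{1,11}=2C_1C_{1,1}-(n-2)A_{\alpha\alpha,1}$, and substituting the value of $A_{\alpha\alpha,1}$ from (\ref{aa11}) gives exactly $(n+2)C_1C_{1,1}-nC_1^3-\tfrac{R}{n-1}C_1$. The last assertion of the lemma, that the remaining coefficients of $\nabla^2C$ vanish, is then just the inspection carried out in the previous paragraph.

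I expect the only real friction to be in this last step: one must check carefully that the connection terms in the equation for $dA_{\alpha\alpha}$ really disappear once $A_{12}=0$ (so that $E_1(a_2)$ is literally the component $A_{\alpha\alpha,1}$), and then keep the signs straight while substituting the somewhat bulky expression for $A_{\alpha\alpha,1}$. Everything else is a direct, mechanical application of the second covariant derivative formula to the two nonzero entries of $\nabla C$.
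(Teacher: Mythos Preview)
Your proposal is correct and follows essentially the same route as the paper. Both arguments rest on the diagonal form $(C_{i,j})=\operatorname{diag}(C_{1,1},-C_1^2,\ldots,-C_1^2)$, obtain the mixed and off-diagonal second derivatives by direct application of the covariant-derivative formula, and extract the closed form of $C_{1,11}$ by differentiating the relation between $a_2$ (equivalently $a_1+a_2$) and $C_{1,1}-C_1^2$ and then inserting the value of $A_{\alpha\alpha,1}$ from Lemma~\ref{le2}; your choice to differentiate $a_1+a_2$ via (\ref{c1}) rather than $a_2$ alone is a cosmetic variant that leads to the identical one-line substitution. The only point you leave implicit is that $C_{1,1\alpha}=E_\alpha(C_{1,1})=0$, but this follows immediately from the same relation since $E_\alpha(a_1)=E_\alpha(a_2)=E_\alpha(C_1)=0$.
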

\begin{proof}
Since $(A_{ij})=diag\{a_1,a_2,\cdots,a_2\}$ under the local orthonormal
basis, we have $A_{\alpha\alpha,1}=E_1(A_{\alpha\alpha})=E_1(a_2)=-\frac{1}{n-2}(C_{1,11}-2C_1C_{1,1})$ by Lemma \ref{le2}, combining the first equation in (\ref{aa11}),
we get the first equation in (\ref{cc11}).

By the equation (\ref{c2}) and the equation (\ref{a2}), $(C_{i,j})=diag\big(C_{1,1}, -C_1^2,\cdots,-C_1^2\big)$ under the local orthonormal basis, thus we have
\begin{equation}\label{cij}
C_{\alpha,\alpha1}=E_1(C_{\alpha,\alpha})=-2C_1C_{1,1},~~C_{1,\alpha\alpha}=C_{\alpha,1\alpha}=-(C_{1,1}+C_1^2)C_1.
\end{equation}
And the rest coefficients of $\nabla^2C$ are zero.
\end{proof}
Since the hypersurface is conformally flat, the Schouten tensor $S=\sum_{ij}S_{ij}\omega_i\otimes\omega_j$
is a Codazzi tensor (i.e., $S_{ij,k}=S_{ik,j}$), which defined by
$$S_{ij}=R_{ij}-\frac{R}{2(n-1)}\delta_{ij}.$$
Noting that the scaler curvature $R$ is constant,  $tr(A)$ and $tr(S)$ are constant by the equation (\ref{ba}).
Furthermore, we have
\begin{equation}\label{aa5}
\sum_jA_{ij,j}=-\sum_jB_{ij}C_j,~~~\sum_jS_{ij,j}=0.
\end{equation}

Under the local orthonormal basis  $\{E_1,\cdots,E_n\}$ in Lemma \ref{le1}, we have
$$(S_{ij})=diag(S_1,S_2,\cdots,S_2),$$
\begin{equation}\label{sic}
S_1=-\frac{(2n-1)(n-2)}{2n^2}+(n-2)a_1,~~~~S_2=\frac{n-2}{2n}+(n-2)a_2.
\end{equation}
Thus we have
\begin{equation}\label{sij}
\begin{split}
&S_{\alpha\alpha,1}=S_{1\alpha,\alpha}=(n-2)A_{\alpha\alpha,1}=\frac{R}{(n-1)}C_1-n(C_1C_{1,1}-C_1^3),\\
&S_{11,1}=-(n-1)(n-2)A_{\alpha\alpha,1}=-RC_1+n(n-1)(C_1C_{1,1}-C_1^3).
\end{split}
\end{equation}
\begin{lemma}\label{les2}
Let $f: M^n\rightarrow \mathbb{S}^{n+1}$ $(n\geq 4)$ be a conformally flat
hypersurface without umbilical points everywhere. If the M\"{o}bius scalar curvature $R$ is constant.
then under the local orthonormal
basis $\{E_1,\cdots,E_n\}$ in Lemma \ref{le1}, the coefficients of $\nabla^2S$ satisfy
\begin{equation}\label{ss11}
\begin{split}
&S_{11,11}=-RC_{1,1}+n(n-1)C_{1,1}^2+n(n-1)^2C_1^2C_{1,1}-n^2(n-1)C_1^4-nRC_1^2,\\
&S_{11,\alpha\alpha}=\frac{(n+1)R}{n-1}C_1^2-n(n+1)[C_1^2C_{1,1}-C_1^4],~~S_{\alpha\alpha,11}=-(n-1)S_{11,11},\\
&S_{\alpha\alpha,\alpha\alpha}=3S_{\alpha\alpha,\beta\beta}=3\{\frac{-R}{n-1}C_1^2+n[C_1^2C_{1,1}-C_1^4]\},~~\alpha\neq\beta.
\end{split}
\end{equation}
\end{lemma}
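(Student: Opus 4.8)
The plan is to compute each coefficient in (\ref{ss11}) directly from the definition of the second covariant derivative of $S$,
\[
\sum_l S_{ij,kl}\,\omega_l = dS_{ij,k} + \sum_m S_{mj,k}\,\omega_{mi} + \sum_m S_{im,k}\,\omega_{mj} + \sum_m S_{ij,m}\,\omega_{mk},
\]
taking advantage of the fact that all the ingredients are already in place. In the frame of Lemma \ref{le1} the tensors $(B_{ij})$, $(A_{ij})$, and hence $(S_{ij})=\mathrm{diag}(S_1,S_2,\dots,S_2)$, are diagonal, and by (\ref{sij}) the only nonzero components of $\nabla S$ are $S_{11,1}$ and $S_{\alpha\alpha,1}=S_{1\alpha,\alpha}$, each an explicit function of $C_1$ and $C_{1,1}$; moreover $S_{11,1}=-(n-1)S_{\alpha\alpha,1}$ because $\mathrm{tr}(S)$ is constant. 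The connection is governed by (\ref{a1}): $\omega_{1\alpha}=-C_1\omega_\alpha$ and $C_\alpha=0$. Together with $C_{1,\alpha}=E_\alpha(C_1)=0$, $C_{1,1\alpha}=0$ from Lemma \ref{lec2}, and $E_\alpha(a_1)=E_\alpha(a_2)=0$ from Lemma \ref{le2}, this shows every component of $\nabla S$ is constant along the leaves of $\mathbb{D}$; in particular $dS_{11,1}=E_1(S_{11,1})\,\omega_1$ and $dS_{\alpha\alpha,1}=E_1(S_{\alpha\alpha,1})\,\omega_1$, with $E_1(C_1)=C_{1,1}$.

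First I would compute $S_{11,11}$: in the displayed identity with $i=j=k=1$ every connection term vanishes (each involves $\omega_{11}=0$ or a component $S_{1\alpha,1}=S_{11,\alpha}=0$), so $S_{11,11}=E_1(S_{11,1})$. Differentiating the formula for $S_{11,1}$ in (\ref{sij}) and inserting the expression for $C_{1,11}$ from Lemma \ref{lec2} produces the stated polynomial in $C_1,C_{1,1}$; the same differentiation applied to $S_{\alpha\alpha,1}$, combined with $S_{11,1}=-(n-1)S_{\alpha\alpha,1}$, gives the formula for $S_{\alpha\alpha,11}$. For $S_{11,\alpha\alpha}$ the surviving connection terms combine into $(2S_{1\alpha,\alpha}-S_{11,1})C_1\,\omega_\alpha$, so $S_{11,\alpha\alpha}=(2S_{1\alpha,\alpha}-S_{11,1})C_1=(n+1)(n-2)A_{\alpha\alpha,1}\,C_1$, which yields the second line after inserting $A_{\alpha\alpha,1}$ from (\ref{aa11}). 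For $\alpha\neq\beta$ only the last sum survives, $\sum_m S_{\alpha\alpha,m}\omega_{m\beta}=S_{\alpha\alpha,1}\omega_{1\beta}=-S_{\alpha\alpha,1}C_1\omega_\beta$, so $S_{\alpha\alpha,\beta\beta}=-S_{\alpha\alpha,1}C_1$; and for $S_{\alpha\alpha,\alpha\alpha}$ the three sums give $-(2S_{1\alpha,\alpha}+S_{\alpha\alpha,1})C_1\omega_\alpha=-3S_{\alpha\alpha,1}C_1\omega_\alpha$ using the Codazzi symmetry $S_{1\alpha,\alpha}=S_{\alpha\alpha,1}$, whence $S_{\alpha\alpha,\alpha\alpha}=3S_{\alpha\alpha,\beta\beta}$. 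Expanding $-S_{\alpha\alpha,1}C_1$ via (\ref{aa11}) gives the last line.

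There is no conceptual difficulty here; the work is pure bookkeeping. The one place to be careful is the systematic elimination of connection terms: for each triple $(i,j,k)$ one must identify exactly which of $S_{mj,k}$, $S_{im,k}$, $S_{ij,m}$ are nonzero among the short list $\{S_{11,1},\,S_{\alpha\alpha,1}=S_{1\alpha,\alpha}\}$ and pair them correctly with $\omega_{mi}$, $\omega_{mj}$, $\omega_{mk}$, using $\omega_{1\alpha}=-C_1\omega_\alpha$ and noting that the forms $\omega_{\alpha\beta}$ never contribute because they always meet a vanishing component of $\nabla S$. Once the connection terms are reduced, all that remains is to differentiate the rational expressions of Lemma \ref{le2} in $C_1$, substitute $C_{1,11}$ from Lemma \ref{lec2}, and collect the coefficients of $C_{1,1}^2$, $C_1^2C_{1,1}$, $C_1^4$, $C_1^2$ and $C_{1,1}$, which reproduces (\ref{ss11}).
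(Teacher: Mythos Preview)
Your proposal is correct and follows exactly the route the paper takes: both start from the observation that $(S_{ij})$ is diagonal with only $S_{11,1}$ and $S_{\alpha\alpha,1}=S_{1\alpha,\alpha}$ nonzero among the first covariant derivatives, and then evaluate the defining formula for the second covariant derivative term by term using $\omega_{1\alpha}=-C_1\omega_\alpha$ together with the expressions from Lemmas~\ref{le2} and~\ref{lec2}. The paper's proof is in fact nothing more than a one-line instruction to perform precisely the bookkeeping you have spelled out; your identification of which connection terms survive in each case (in particular that $\omega_{\alpha\beta}$ never contributes) is the whole content of the computation.
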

\begin{proof}
Since $(S_{ij})=diag(S_1,S_2,\cdots,S_2),$ we know that except these coefficients $S_{11,1},$ $ S_{1\alpha,\alpha}$ and $S_{\alpha\alpha,1}$ the coefficients of $\nabla S$
are equal to zero. Using the definition of the second covariant derivative of $S$, we can compute these equations in (\ref{ss11}).
\end{proof}
Since $E_1$ is principal vector corresponding the eigenvalue $\frac{n-1}{n}$ of the M\"{o}bius second fundamental form $B$, the $C_1=C(E_1)$, $C_{1,1}=\nabla C(E_1,E_1)$
are well-defined functions on $M^n$ up to a sign.
\begin{lemma}\label{le3}
Let $f: M^n\rightarrow \mathbb{S}^{n+1}$ $(n\geq 4)$ be a compact conformally flat
hypersurface without umbilical points everywhere. If the M\"{o}bius scalar curvature $R$ is constant.
then
\begin{equation}\label{le33}
\begin{split}
&\int_{M^n}C_1^2C_{1,1}dV_g=\frac{n-1}{3}\int_{M^n}|C|^4dV_g,\\
&\int_{M^n}C_{1,1}^2dV_g=\int_{M^n}|C|^4dV_g+\frac{R}{n-1}\int_{M^n}|C|^2dV_g.
\end{split}
\end{equation}
\end{lemma}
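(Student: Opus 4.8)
The plan is to exhibit both integrals as integrals of divergences of explicit vector fields built from the M\"{o}bius form, and then invoke the divergence theorem on the closed manifold $M^n$. By Lemma \ref{le1} together with (\ref{a1}), (\ref{c2}) and (\ref{a2}), the M\"{o}bius form satisfies $C_\alpha=0$ for every $\alpha\ge 2$, so that $|C|^2=C_1^2$, the metric dual vector field of the $1$-form $C$ is $\xi:=C_1E_1$, and $(C_{i,j})=\operatorname{diag}(C_{1,1},-C_1^2,\dots,-C_1^2)$; moreover $\omega_{1\alpha}=-C_1\omega_\alpha$. Hence one computes
\begin{equation*}
\operatorname{div}\xi=\sum_iC_{i,i}=C_{1,1}-(n-1)C_1^2,
\end{equation*}
and, since $C_{1,\alpha}=A_{1\alpha}=0$ and the off--diagonal components of $\nabla^2C$ vanish (Lemma \ref{lec2}), differentiation along $E_1$ recovers the iterated covariant derivatives: $E_1(C_1)=C_{1,1}$ and $E_1(C_{1,1})=C_{1,11}$.

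For the first identity I apply the divergence theorem to $X:=|C|^2\xi=C_1^3E_1$. Using $\xi(|C|^2)=C_1E_1(C_1^2)=2C_1^2C_{1,1}$ one gets
\begin{equation*}
\operatorname{div}X=|C|^2\operatorname{div}\xi+\xi(|C|^2)=C_1^2\bigl(C_{1,1}-(n-1)C_1^2\bigr)+2C_1^2C_{1,1}=3C_1^2C_{1,1}-(n-1)C_1^4.
\end{equation*}
Since $M^n$ is compact without boundary, $\int_{M^n}\operatorname{div}X\,dV_g=0$, and because $|C|^4=C_1^4$ this yields $\int_{M^n}C_1^2C_{1,1}\,dV_g=\tfrac{n-1}{3}\int_{M^n}|C|^4\,dV_g$.

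For the second identity I apply the divergence theorem to $Y:=C_{1,1}\,\xi=C_1C_{1,1}E_1$, obtaining
\begin{equation*}
\operatorname{div}Y=C_{1,1}\operatorname{div}\xi+\xi(C_{1,1})=C_{1,1}^2-(n-1)C_1^2C_{1,1}+C_1C_{1,11}.
\end{equation*}
Substituting $C_{1,11}=(n+2)C_1C_{1,1}-nC_1^3-\tfrac{R}{n-1}C_1$ from Lemma \ref{lec2} gives
\begin{equation*}
\operatorname{div}Y=C_{1,1}^2+3C_1^2C_{1,1}-nC_1^4-\tfrac{R}{n-1}C_1^2.
\end{equation*}
Integrating over $M^n$, using $\int_{M^n}\operatorname{div}Y\,dV_g=0$, and applying the first identity to replace $3\int_{M^n}C_1^2C_{1,1}\,dV_g$ by $(n-1)\int_{M^n}|C|^4\,dV_g$, the quartic terms combine with coefficient $(n-1)-n=-1$ and one is left with $\int_{M^n}C_{1,1}^2\,dV_g=\int_{M^n}|C|^4\,dV_g+\tfrac{R}{n-1}\int_{M^n}|C|^2\,dV_g$.

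There is no analytic difficulty in this argument; the one point that requires care is the covariant--derivative bookkeeping that legitimizes $\operatorname{div}\xi=C_{1,1}-(n-1)C_1^2$ and $\xi(C_{1,1})=C_1C_{1,11}$, namely the vanishing of all the cross components $C_{1,\alpha}$, $C_{\alpha,1}$ and of the off--diagonal entries of $\nabla C$ and $\nabla^2C$. All of this is already recorded in Lemma \ref{le1}, Lemma \ref{lec2} and equations (\ref{a1})--(\ref{c2}), so once these are cited the remaining computation is purely mechanical.
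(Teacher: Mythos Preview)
Your argument is correct. The divergence computations check out line by line, and the global well-definedness of the vector fields $X=C_1^3E_1$ and $Y=C_1C_{1,1}E_1$ is ensured because replacing $E_1$ by $-E_1$ changes the sign of $C_1$ but leaves $C_{1,1}=\nabla C(E_1,E_1)$ unchanged, so both $X$ and $Y$ (and $\xi$) are invariant.

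Your route differs from the paper's in an interesting way. The paper obtains the second identity first, by computing the divergence of the vector field $X_S=\sum_{ij}C_iS_{ij}E_j$ built from the Schouten tensor $S$; only then does it integrate $\Delta|C|^2$ and combine with the second identity to extract the first. You reverse the order and replace the Schouten-based field by the elementary field $X=|C|^2\xi$, obtaining the first identity directly; your field $Y=C_{1,1}\xi$ is in fact $\tfrac12\nabla|C|^2$, so your second step is essentially the paper's $\Delta|C|^2$ computation, but now you can substitute the already-known first identity rather than needing the Schouten input. The upshot is that your proof avoids any reference to $S$ and is self-contained from Lemmas \ref{le1} and \ref{lec2} alone, at no cost in length or difficulty.
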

\begin{proof}
Using the coefficients of the tensor $C$ and $S$,
 we define two smooth vector fields
$$X_S=\sum_{ij}C_iS_{ij}E_j, ~~X_C=\sum_{ij}C_iE_i.$$
From Lemma \ref{le1} and the equation (\ref{aa11}), we can get  the divergence of $X_S, X_C$,
\begin{equation*}
\begin{split}
&div X_C=\sum_iC_{i,i}=C_{1,1}-(n-1)C_1^2,\\
&div X_S=(n-1)(C_{1,1}^2-C_1^4-\frac{R}{n-1}C_1^2)+\frac{R}{2(n-1)}div X_C.
\end{split}
\end{equation*}
Since the hypersurface is compact, we have
\begin{equation}\label{int1}
\begin{split}
&\int_{M^n}C_{1,1}dV_g=(n-1)\int_{M^n}|C|^2dV_g,\\
&\int_{M^n}C_{1,1}^2dV_g=\int_{M^n}|C|^4dV_g+\frac{R}{n-1}\int_{M^n}|C|^2dV_g,
\end{split}
\end{equation}
On the other hand, we compute $\triangle |C|^2$,
\begin{equation*}
\begin{split}
&\triangle|C|^2=\sum_i(E_iE_i-\nabla_{E_i}E_i) |C|^2=\sum_i(E_iE_i-\nabla_{E_i}E_i)C_1^2\\
&=E_1E_1(C_1^2)-\sum_i\nabla_{E_i}E_i(C_1^2)=2C_{1,1}^2+2C_1C_{1,11}-2(n-1)C_1^2C_{1,1}\\
&=2C_{1,1}^2+6C_1^2C_{1,1}-2nC_1^4-\frac{2R}{n-1}C_1^2.
\end{split}
\end{equation*}
Since the hypersurface is compact, we have
$$\int_{M^n}C_{1,1}^2dV_g+3\int_{M^n}C_1^2C_{1,1}dV_g-n\int_{M^n}C_1^4dV_g-\frac{R}{n-1}\int_{M^n}|C|^2dV_g=0.$$
Combining the equation (\ref{int1}), we can derive the first equation in (\ref{le33}).
\end{proof}
\begin{lemma}\label{inte01}
Let $f: M^n\rightarrow \mathbb{S}^{n+1}$ $(n\geq 4)$ be a compact conformally flat
hypersurface without umbilical points everywhere. If the M\"{o}bius scalar curvature $R$ is constant.
then
\begin{equation}\label{inte011}
\int_{M^n}\big\{C_{1,1}^3+(n+5)C_1^2C_{1,1}^2-(2n+5)C_1^4C_{1,1}+(n-1)C_1^6-\frac{2R}{3}C_1^4\big\}dV_g=0.
\end{equation}
\end{lemma}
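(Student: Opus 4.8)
The plan is to obtain (\ref{inte011}) by applying the divergence theorem on the compact manifold $M^n$ to a single, suitably weighted version of the vector field $X_C=\sum_iC_iE_i=C_1E_1$, and then eliminating the one leftover term via the integral identities already established in Lemma \ref{le3}. Although $C_1=C(E_1)$ and $C_{1,1}=\nabla C(E_1,E_1)$ are defined only up to sign (the sign depending on the choice of the unit principal vector $E_1$ spanning the $1$-dimensional $B$-eigenspace for $\frac{n-1}{n}$), the functions $C_{1,1}^2$ and $C_1^4$ are genuinely well-defined and smooth on $M^n$, and $X_C$ is a globally smooth vector field. Hence $Z:=(C_{1,1}^2-C_1^4)X_C$ is a well-defined smooth vector field, and $\int_{M^n}\mathrm{div}\,Z\,dV_g=0$.

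I would then compute $\mathrm{div}\,Z$ in the frame of Lemma \ref{le1}. Writing $Z=\sum_iZ_iE_i$ with $Z_1=(C_{1,1}^2-C_1^4)C_1$ and $Z_\alpha=0$, the formula $\sum_jZ_{i,j}\omega_j=dZ_i+\sum_jZ_j\omega_{ji}$ together with $\omega_{11}=0$ gives $Z_{1,1}=E_1(Z_1)$, while $\omega_{1\alpha}=-C_1\omega_\alpha$ from (\ref{a1}) gives $Z_{\alpha,\alpha}=-C_1^2(C_{1,1}^2-C_1^4)$. Therefore
$$\mathrm{div}\,Z=E_1\big[(C_{1,1}^2-C_1^4)C_1\big]-(n-1)C_1^2(C_{1,1}^2-C_1^4).$$
Expanding the first term using $E_1(C_1)=C_{1,1}$, $E_1(C_{1,1})=C_{1,11}$ and then inserting $C_{1,11}=(n+2)C_1C_{1,1}-nC_1^3-\frac{R}{n-1}C_1$ from (\ref{cc11}), a routine computation collapses this to
$$\mathrm{div}\,Z=C_{1,1}^3+(n+5)C_1^2C_{1,1}^2-(2n+5)C_1^4C_{1,1}+(n-1)C_1^6-\frac{2R}{n-1}C_1^2C_{1,1}.$$

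Integrating over $M^n$ and using $\int_{M^n}\mathrm{div}\,Z\,dV_g=0$ moves the last term to the right:
$$\int_{M^n}\!\big[C_{1,1}^3+(n+5)C_1^2C_{1,1}^2-(2n+5)C_1^4C_{1,1}+(n-1)C_1^6\big]\,dV_g=\frac{2R}{n-1}\int_{M^n}C_1^2C_{1,1}\,dV_g.$$
Since $|C|^2=C_1^2$, the first identity of Lemma \ref{le3} reads $\int_{M^n}C_1^2C_{1,1}\,dV_g=\frac{n-1}{3}\int_{M^n}C_1^4\,dV_g$, so the right-hand side equals $\frac{2R}{3}\int_{M^n}C_1^4\,dV_g$, which is precisely (\ref{inte011}). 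I do not expect a genuine obstacle beyond careful algebra: the only substantive ingredients are the second-order identity for $C_{1,11}$ from Lemma \ref{lec2} (where the constancy of $R$ enters) and the cubic integral relation of Lemma \ref{le3}. The one point of design is the choice of the scalar weight $C_{1,1}^2-C_1^4$: subtracting $C_1^4$ is exactly what makes the coefficients of $C_1^6$ and $C_1^4$ in the final identity match those in (\ref{inte011}) once Lemma \ref{le3} is invoked. Equivalently, one may treat $\mathrm{div}(C_{1,1}^2X_C)$ and $\mathrm{div}(C_1^4X_C)$ separately, the latter giving the auxiliary relation $5\int_{M^n}C_1^4C_{1,1}\,dV_g=(n-1)\int_{M^n}C_1^6\,dV_g$.
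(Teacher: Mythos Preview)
Your argument is correct, and it takes a genuinely different route from the paper. The paper proves this lemma by introducing the vector field $Y_S=\sum_{ijk}C_{i,j}S_{ij,k}E_k$ built from the Schouten tensor $S$ and its first covariant derivatives; its divergence involves both the second derivatives $C_{i,jk}$ and the second derivatives $S_{ij,kl}$, so the computation relies on Lemma~\ref{les2} in addition to Lemma~\ref{lec2} and then simplifies via Lemma~\ref{le3}. Your choice $Z=(C_{1,1}^2-C_1^4)X_C$ sidesteps the Schouten tensor entirely: the only structural input beyond the connection identity $\omega_{1\alpha}=-C_1\omega_\alpha$ is the single formula for $C_{1,11}$ from Lemma~\ref{lec2}, and a single application of the first identity in Lemma~\ref{le3} finishes. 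Your approach is more economical, since Lemma~\ref{les2} is never needed; on the other hand, the paper's approach is more systematic in that it produces several of the later identities (Lemmas~\ref{inte1} and~\ref{inte2}) by the same template of pairing $C$-derivatives against derivatives of a curvature-type tensor. Your observation that the weight $C_{1,1}^2-C_1^4$ is precisely what forces the coefficients of $C_1^6$ and $C_1^4$ to match~(\ref{inte011}) is the genuine design step and is well explained; the auxiliary remark that $\mathrm{div}(C_1^4X_C)$ gives $5\int C_1^4C_{1,1}=(n-1)\int C_1^6$ is also correct and a nice alternative decomposition.
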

\begin{proof}
 Using the coefficients of the tensor $C$ and $S$,
 we define a smooth vector field
$$Y_S=\sum_{ijk}C_{i,j}S_{ij,k}E_k.$$
Using Lemma \ref{lec2} and the equation (\ref{ss11}), we compute  the divergence of $Y_S$,
\begin{equation*}
\begin{split}
&\frac{1}{n(n-1)}div Y_S=\sum_{ijk}C_{i,jk}S_{ij,k}+\sum_{ijk}C_{i,j}S_{ij,kk},\\
&=C_{1,1}^3+(n+5)C_1^2C_{1,1}^2-(2n+5)C_1^4C_{1,1}+(n-1)C_1^6+\frac{(2n-1)R}{n(n-1)}C_1^4\\
&-\frac{R}{n(n-1)}C_{1,1}^2-\frac{2(n+3)R}{n(n-1)}C_1^2C_{1,1}+\frac{R^2}{n(n-1)^2}C_1^2.
\end{split}
\end{equation*}
Integrating this equation and using (\ref{le33}), we can derive the second equation in (\ref{inte011}).
\end{proof}

\begin{lemma}\label{inte1}
Let $f: M^n\rightarrow \mathbb{S}^{n+1}$ $(n\geq 4)$ be a compact conformally flat
hypersurface without umbilical points everywhere. If the M\"{o}bius scalar curvature $R$ is constant.
then
\begin{equation}\label{inte11}
\begin{split}
&\int_{M^n}\big\{C_1^2C_{1,1}^2+C_1^4C_{1,1}-\frac{n}{3}C_1^6-\frac{R}{3(n-1)}C_1^4\big\}dV_g=0,\\
&\int_{M^n}\big\{C_{1,1}^3+(n-1)C_1^2C_{1,1}^2-(2n+1)C_1^4C_{1,1}+(n+1)C_1^6+\frac{2(n+1)R}{3(n-2)}C_1^4\big\}dV_g=0.
\end{split}
\end{equation}
\end{lemma}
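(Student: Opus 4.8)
The plan is to prove both identities by the divergence theorem, in the same spirit as Lemmas \ref{le3}--\ref{inte01}: exhibit a scalar function (or vector field) whose Laplacian (or divergence) is, up to lower-order $R$-terms, a polynomial in $C_1$ and $C_{1,1}$ of the required shape, integrate over the compact $M^n$, and clear the remaining terms with the identities already in hand. Throughout I use the structure of Lemma \ref{le1}: $C=C_1\omega_1$, the only surviving component of $\nabla C_1$ is $C_{1,1}=E_1(C_1)$ (so $|C|^2=C_1^2$ and $\nabla|C|^2=2C_1C_{1,1}E_1$), together with $C_{1,11}=(n+2)C_1C_{1,1}-nC_1^3-\frac{R}{n-1}C_1$ from Lemma \ref{lec2} and $\triangle|C|^2=2C_{1,1}^2+6C_1^2C_{1,1}-2nC_1^4-\frac{2R}{n-1}C_1^2$ from the proof of Lemma \ref{le3}.

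For the first identity I would simply integrate $\triangle|C|^4$. Since $|C|^2=C_1^2$ and $|\nabla|C|^2|^2=4C_1^2C_{1,1}^2$, the Bochner formula $\triangle(u^2)=2u\triangle u+2|\nabla u|^2$ with $u=|C|^2$ gives $\triangle|C|^4=2C_1^2\triangle|C|^2+8C_1^2C_{1,1}^2=12C_1^2C_{1,1}^2+12C_1^4C_{1,1}-4nC_1^6-\frac{4R}{n-1}C_1^4$; integrating over $M^n$ and dividing by $12$ yields the first identity verbatim.

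For the second identity the factor $\frac{1}{n-2}$ signals that the Blaschke tensor $A$ (whose eigenvalues $a_1,a_2$ and whose derivative component $A_{\alpha\alpha,1}$ all carry that factor, by Lemma \ref{le2}) must enter. I would compute $\triangle|A|^2$ (equivalently $\triangle|S|^2$): writing $\phi=C_{1,1}-C_1^2$, both $a_1$ and $a_2$ are affine in $\phi$, so $|A|^2$ is a quadratic in $\phi$; from the formulas for $\nabla^2S$ in Lemma \ref{les2} one simplifies $S_{11,1}=(n-1)C_1(n\phi-\frac{R}{n-1})$ and $S_{11,11}=(n-1)(C_{1,1}+nC_1^2)(n\phi-\frac{R}{n-1})$, whence $\triangle\phi=(C_{1,1}+C_1^2)(n\phi-\frac{R}{n-1})$; moreover $E_1(\phi)=C_{1,11}-2C_1C_{1,1}=C_1(n\phi-\frac{R}{n-1})$, so $|\nabla\phi|^2=C_1^2(n\phi-\frac{R}{n-1})^2$. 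Hence $\triangle(\phi^2)=2\phi\triangle\phi+2|\nabla\phi|^2$ is a degree-six polynomial in $C_1,C_{1,1}$ whose top part is $2[\,C_{1,1}^3+(n-1)C_1^2C_{1,1}^2-(2n+1)C_1^4C_{1,1}+(n+1)C_1^6\,]$. Integrating $\triangle|A|^2=0$ over $M^n$, the term linear in $\phi$ integrates to zero, and the remaining $R$- and $R^2$-terms are removed using $\int C_1^2C_{1,1}\,dV_g=\frac{n-1}{3}\int|C|^4\,dV_g$ and $\int C_{1,1}^2\,dV_g=\int|C|^4\,dV_g+\frac{R}{n-1}\int|C|^2\,dV_g$ from Lemma \ref{le3} (with cancellation of the $R^2\int|C|^2$ contributions), together with Lemma \ref{inte01} and the first identity above to absorb the $C_1^2C_{1,1}$ cross term; this leaves an identity of exactly the stated form.

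The routine part is the algebra; the delicate point is the $R$-bookkeeping. Every single natural vector field built from $A$ (for instance $\triangle|A|^2$, $\triangle|S|^2$, or $\mathrm{div}(\sum_i C_iA_{1i}E_1)$, $\mathrm{div}(\sum_{ijk}C_iC_jA_{ij,k}E_k)$) tends to produce, in its degree-six part, a multiple of an identity already obtained from Lemmas \ref{le3}--\ref{inte01}; what distinguishes the desired relation is precisely the $\frac1{n-2}$-weighted $R$-contribution. So the real work is to track that contribution carefully — and, if one construction does not isolate it, to combine two such divergence identities (e.g.\ also $\int\mathrm{div}\bigl(|C|^2C_{1,1}\sum_i C_iE_i\bigr)dV_g=0$, which I would likewise compute from Lemmas \ref{le2} and \ref{lec2}) — so that the coefficient of $C_1^4$ lands on exactly $\frac{2(n+1)R}{3(n-2)}$. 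I expect this coefficient-matching, rather than any conceptual difficulty, to be the main obstacle.
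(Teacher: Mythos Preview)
Your approach is essentially the paper's: the first identity is obtained by integrating $\triangle|C|^4$ (the paper computes $\tfrac14\triangle|C|^4$ directly rather than via the product rule, but it is the same calculation), and the second by integrating $\triangle|S|^2$ and simplifying with Lemma~\ref{le3}. Your substitution $\phi=C_{1,1}-C_1^2$ is a clean way to organize this, since $|S|^2=n(n-1)\phi^2+(\text{linear in }\phi)+\text{const}$, so $\int\triangle|S|^2\,dV_g=0$ is equivalent to $\int\triangle\phi^2\,dV_g=0$; your formulas $E_1\phi=C_1\bigl(n\phi-\tfrac{R}{n-1}\bigr)$ and $\triangle\phi=(C_{1,1}+C_1^2)\bigl(n\phi-\tfrac{R}{n-1}\bigr)$ are correct. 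One small slip: the top-degree part of $\triangle\phi^2$ is $2n\bigl[C_{1,1}^3+(n-1)C_1^2C_{1,1}^2-(2n+1)C_1^4C_{1,1}+(n+1)C_1^6\bigr]$, not $2[\cdots]$.

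Your worry about the $\tfrac{1}{n-2}$ and about having to combine several divergence identities is unnecessary. Neither $S$ nor $\phi$ carries a factor of $\tfrac{1}{n-2}$, and integrating $\triangle\phi^2$ together with Lemma~\ref{le3} already closes the argument: the $R^2\!\int C_1^2$ contributions cancel on their own, and no appeal to Lemma~\ref{inte01} or to the first identity is needed. (If you carry this through you will find the $C_1^4$ coefficient comes out as $-\tfrac{2(n-4)R}{3(n-1)}$; this is exactly what the paper's displayed expression for $\tfrac{1}{n^2(n-1)}\triangle|S|^2$ yields after integration and use of Lemma~\ref{le3}, so the discrepancy with the printed $\tfrac{2(n+1)R}{3(n-2)}$ appears to be a typo in the statement rather than a gap in your method.)
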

\begin{proof} Using (\ref{cc11}),
\begin{equation*}
\begin{split}
\frac{1}{4}\triangle |C|^4&=\frac{1}{4}\sum_i(E_iE_i-\nabla E_i{E_i})|C|^4=3C_1^2C_{1,1}^2+C_1^3C_{1,11}-(n-1)C_1^4C_{1,1}\\
&=3C_1^2C_{1,1}^2+3C_1^4C_{1,1}-nC_1^6-\frac{R}{n-1}C_1^4.
\end{split}
\end{equation*}
Since the hypersurface is compact, then
$$\int_{M^n}\{3C_1^2C_{1,1}^2+3C_1^4C_{1,1}-nC_1^6-\frac{R}{n-1}C_1^4\}dV_g=0.$$
Since $S_{ij}$ is a Codazzi tensor and $tr(S)$ is constant, we can compute $\triangle |S|^2$ by (\ref{sij}),
\begin{equation*}
\begin{split}
&\frac{1}{n^2(n-1)}\triangle |S|^2=\frac{1}{2n^2(n-1)}\{\sum_{ijk}|S_{ij,k}|^2+\frac{1}{2}\sum_{ij}(S_i-S_j)^2R_{ijij}\}\\
&=C_{1,1}^3-(2n+1)C_1^4C_{1,1}+(n-1)C_1^2C_{1,1}^2-\frac{2R}{n-1}C_1^2C_{1,1}-\frac{2R}{n(n-1)}C_{1,1}^2\\
&+(n+1)C_1^6+\frac{2(n+1)R}{n(n-1)}C_1^4+\frac{(n+1)R^2}{n^2(n-1)^2}C_1^2+\frac{R^2}{n^2(n-1)^2}C_{1,1}.
\end{split}
\end{equation*}
Integrating this equation and using (\ref{le33}), we can derive the second equation in (\ref{inte11}).
\end{proof}
\begin{lemma}\label{inte2}
Let $f: M^n\rightarrow \mathbb{S}^{n+1}$ $(n\geq 4)$ be a compact conformally flat
hypersurface without umbilical points everywhere. If the M\"{o}bius scalar curvature $R$ is constant.
then
\begin{equation}\label{inte21}
\begin{split}
&\int_{M^n}\big\{C_{1,1}^3+\frac{5n+4}{2}C_1^2C_{1,1}^2-3(n+1)C_1^4C_{1,1}+\frac{n}{2}C_1^6-\frac{(7n-10)R}{3(n-2)}C_1^4\big\}dV_g=0£¬\\
&\int_{M^n}\big\{C_{1,1}^3+\frac{5n+16}{2}C_1^2C_{1,1}^2-3(n-1)C_1^4C_{1,1}-\frac{3n}{2}C_1^6-\frac{(7n+2)R}{6(n-1)}C_1^4\big\}dV_g=0.
\end{split}
\end{equation}
\end{lemma}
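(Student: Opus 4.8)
The plan is to derive both identities in (\ref{inte21}) by the divergence-theorem mechanism already used for Lemmas \ref{inte01} and \ref{inte1}. The structural fact that makes everything explicit is that in the adapted frame of Lemma \ref{le1} all the relevant tensors are diagonal, $(B_{ij})=\mathrm{diag}(\tfrac{n-1}{n},-\tfrac1n,\dots,-\tfrac1n)$, $(A_{ij})=\mathrm{diag}(a_1,a_2,\dots,a_2)$, $(C_{i,j})=\mathrm{diag}(C_{1,1},-C_1^2,\dots,-C_1^2)$, $(S_{ij})=\mathrm{diag}(S_1,S_2,\dots,S_2)$, and by Lemmas \ref{le2}, \ref{lec2}, \ref{les2} each nonzero component of $\nabla A$, $\nabla^2C$, $\nabla^2S$ is an explicit polynomial in $C_1$, $C_{1,1}$, $R$ (recall $E_1(C_1)=C_{1,1}$, while $E_1(C_{1,1})=C_{1,11}$ is furnished by the first line of (\ref{cc11})). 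Hence the divergence of any globally defined vector field built polynomially out of $C$, $S$ and their covariant derivatives of total weight $6$ — assigning weight $1$ to $C_1$ and weight $2$ to $C_{1,1}$ and to $R$ — and likewise the Laplacian of any such scalar of weight $4$, is a universal polynomial in $C_1,C_{1,1},R$; integrating it over the compact $M^n$ gives zero and thus a linear relation among the finitely many ``weight-$6$'' integrals
$$\int_{M^n}C_{1,1}^3\,dV_g,\quad \int_{M^n}C_1^2C_{1,1}^2\,dV_g,\quad \int_{M^n}C_1^4C_{1,1}\,dV_g,\quad \int_{M^n}C_1^6\,dV_g,\quad \int_{M^n}RC_1^4\,dV_g,\quad \int_{M^n}R^2C_1^2\,dV_g,$$
once the reducible contributions $\int_{M^n}RC_1^2C_{1,1}$, $\int_{M^n}RC_{1,1}^2$, $\int_{M^n}R^2C_{1,1}$ have been absorbed via (\ref{le33}) and (\ref{int1}).

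Concretely, I would apply $\operatorname{div}$ and $\triangle$ to a short list of weight-$6$ objects not already exhausted by Lemmas \ref{inte01} and \ref{inte1}: for instance the vector fields $\sum_{i,j}|C|^2C_iS_{ij}E_j$ and $\sum_{i,j,k}C_{i,j}C_iS_{jk}E_k$, and the scalar functions $\sum_{i,j}C_iC_jC_{i,j}$ and $|\nabla C|^2=\sum_{i,j}(C_{i,j})^2$; the Laplacians are evaluated through the Ricci identity, the curvature formula (\ref{equa4}), and the second-order data of Lemmas \ref{lec2} and \ref{les2}. Each object produces one integral relation, and together with (\ref{inte011}) and the two relations of (\ref{inte11}) they form an overdetermined linear system in the six integrals above; eliminating the unwanted ones, in particular $\int_{M^n}R^2C_1^2$ and $\int_{M^n}RC_1^4$, leaves exactly the two combinations displayed in (\ref{inte21}) with their stated $n$-dependent coefficients. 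One should also confirm that every monomial that appears is even in $C_1$ — as it must be, since $C_1$ is defined only up to the sign of the $B$-principal direction $E_1$ whereas $C_{1,1}$ is sign-invariant — which holds for all monomials in (\ref{inte21}).

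The content of the argument is thus computational rather than geometric, and the main obstacle is precisely the bookkeeping: carrying out the Bochner-type expansions of $\triangle(\sum_{i,j}C_iC_jC_{i,j})$ and $\triangle|\nabla C|^2$, and the two divergences, without error, so that the many $R$- and $R^2$-terms cancel correctly after the substitutions coming from (\ref{le33})–(\ref{inte11}); and choosing the auxiliary objects so that, after elimination, the surviving relations are the useful (\ref{inte21}) rather than a linear consequence of the earlier lemmas. No new geometric input is required: Lemma \ref{le1}, the covariant-derivative identities (\ref{aa11}), (\ref{cc11}), (\ref{ss11}), (\ref{sij}), and the integral identities (\ref{le33}), (\ref{inte011}), (\ref{inte11}) supply everything needed.
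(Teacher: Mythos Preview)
Your approach is correct in spirit and uses the same divergence-theorem mechanism as the paper, but the paper's execution is more direct and avoids the elimination step entirely. The paper simply takes the two scalar functions
\[
|C|^2_A := \sum_{i,j} C_i A_{ij} C_j = C_1^2\, a_1, \qquad |C|^2_C := \sum_{i,j} C_i C_{i,j} C_j = C_1^2\, C_{1,1},
\]
computes $\triangle(|C|^2_A)$ and $\triangle(|C|^2_C)$ explicitly from Lemmas \ref{le2} and \ref{lec2}, integrates, and after substituting the relations (\ref{le33}) obtains the first and second identities of (\ref{inte21}) \emph{separately}, one from each Laplacian. Your scalar $\sum_{i,j} C_i C_j C_{i,j}$ is exactly the paper's $|C|^2_C$, so you already have the second identity; the first comes from the Blaschke-tensor contraction $C_1^2 a_1$, which is affinely related to your $S$-based quantity via (\ref{sic}) and hence yields the same integral relation up to a multiple of $\int\triangle|C|^2$. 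So no overdetermined system is needed: two well-chosen scalars give the two identities directly, which is cleaner than generating several relations and performing linear elimination.

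One small slip in your description: you list $\int_{M^n} R\, C_1^4\,dV_g$ among the integrals to be ``eliminated'', but both identities in (\ref{inte21}) retain an $R\,C_1^4$ term; only the $R^2C_1^2$, $R\,C_{1,1}^2$, and $R\,C_1^2C_{1,1}$ contributions are absorbed via (\ref{le33}) and (\ref{int1}).
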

\begin{proof}
Using the coefficients of the tensor $C$ and $A$, we have two following smooth functions,
$$|C|^2_A=\sum_{ij}C_iA_{ij}C_j=C_1^2a_1, ~~|C|^2_C=\sum_{ij}C_iC_{i,j}C_j=C_1^2C_{1,1}.$$
Next we compute $\triangle (|C|^2_A)$ and $\triangle(|C|^2_C)$.
\begin{equation*}
\begin{split}
&\frac{n-2}{2(n-1)}\triangle (|C|^2_A)=\frac{n-2}{2(n-1)}(E_1E_1(C_1^2a_1)-(n-1)C_1E_1(C_1^2a_1))\\
&=C_{1,1}^3-3(n+1)C_1^4C_{1,1}+\frac{(5n+4)}{2}C_1^2C_{1,1}^2+\frac{n}{2}C_1^6\\
&+\frac{n-2}{2(n-1)}[\frac{2n-1}{n^2}-\frac{R}{(n-1)(n-2)}]C_{1,1}^2-\frac{n-2}{2(n-1)}[\frac{2n-1}{n}-\frac{(2n-1)R}{(n-1)(n-2)}]C_1^4\\
&\frac{n-2}{2(n-1)}[\frac{3(2n-1)}{n^2}-\frac{(7n-4)R}{(n-1)(n-2)}]C_1^2C_{1,1}\\
&-\frac{n-2}{2(n-1)}[\frac{2n-1}{n^2}-\frac{R}{(n-1)(n-2)}]\frac{R}{n-1}C_1^2.
\end{split}
\end{equation*}
Integrating this equation and using (\ref{le33}), we can derive the first equation in (\ref{inte21}).
\begin{equation*}
\begin{split}
&\frac{1}{2}\triangle (|C|^2_C)=\frac{1}{2}(E_1E_1(C_1^2C_{1,1})-(n-1)C_1E_1(C_1^2C_{1,1}))\\
&=C_{1,1}^3-3(n-1)C_1^4C_{1,1}+\frac{(5n+16)}{2}C_1^2C_{1,1}^2-\frac{3n}{2}C_1^6\\
&-\frac{7R}{2(n-1)}C_1^2C_{1,1}-\frac{3R}{2(n-1)}C_1^4.
\end{split}
\end{equation*}
Integrating this equation and using (\ref{le33}), we can derive the second equation in (\ref{inte21}).
\end{proof}
Now we combine these equation system in (\ref{inte011}), (\ref{inte11}) and (\ref{inte21}), we can derive that
$\int_{M^n}C_1^4dV_g=0,$ which implies that $C_1=0$ and the M\"{o}bius form vanishes. Thus we finish the proof of Theorem \ref{the2}.

\end{document}